\documentclass[hidelinks,onefignum,onetabnum]{siamart250211}

\usepackage{amssymb}
\usepackage{amsfonts}
\usepackage{algpseudocode}
\usepackage{bm}
\usepackage{booktabs}
\usepackage{graphicx}
\usepackage{array}

\newsiamremark{remark}{Remark}
\newsiamthm{assumption}{Assumption}

\newcommand{\R}{\mathbb{R}}
\newcommand{\E}{\mathbb{E}}
\newcommand{\rd}{\mathrm{d}}
\newcommand{\AAm}{\bm{A}}
\DeclareMathOperator*{\argmin}{arg\,min}
\newcolumntype{L}[1]{>{\raggedright\arraybackslash}p{#1}}

\headers{Structure-Adaptive RFM for Elliptic PDEs}{J.~Linghu, H.~Dong, and Y.~Wang}

\title{A Structure-Adaptive Random Feature Method for High-Dimensional Elliptic PDEs}

\author{Jiale Linghu\thanks{School of Mathematics and Statistics, Xidian University,
Xi'an 710071, China.}
\and Hao Dong\footnotemark[1]
\and Yangshuai Wang\thanks{Department of Mathematics, National University of Singapore,
10 Lower Kent Ridge Road, 119076, Singapore
(\email{yswang@nus.edu.sg}, corresponding author).}}

\ifpdf
\hypersetup{
  pdftitle={A Structure-Adaptive Random Feature Method for High-Dimensional Elliptic PDEs},
  pdfauthor={J. Linghu, H. Dong, Y. Wang}
}
\fi

\begin{document}

\maketitle

\begin{abstract}
Random-feature methods reduce high-dimensional elliptic PDE collocation to linear coefficient problems, but full-dimensional trial spaces overlook lower-dimensional structure.
We introduce the Hierarchical Analysis-of-Variance Random Feature Method (HA-RFM), which selects coordinate blocks using closed Sobol indices of the PDE residual, identifies oblique low-rank features from fitted-predictor gradients, and couples all retained features in one regularized least-squares solve.
Under structural and stability hypotheses, we establish an $L^2$ error bound that links solution and residual truncation to finite-width approximation and regularized finite-sample fitting, and we derive guarantees for width and structure recovery.
The resulting width is polynomial in the dimension at fixed interaction order, with dimension-independent higher-order contributions under uniform structural control.
Residual screening achieves exact recovery of the prescribed three-pair support, while fitted-predictor gradients recover oblique directions through dimension~$50$.
In random-ridge tests, less than $1\%$ additional width reduces errors by factors of $14$--$39$ over coordinate blocks and $34$--$100$ over equal-width full-dimensional RFM.
Semilinear computations extend HA-RFM through dimension $100$, while dense and distributed interactions delineate the coordinate families required for broader structure.
\end{abstract}

\begin{keywords}
high-dimensional partial differential equations, random feature method,
ANOVA decomposition, active subspaces, Sobol indices,
least-squares collocation
\end{keywords}

\begin{MSCcodes}
65N12, 65N15, 65N35, 65D15, 35J25
\end{MSCcodes}

\section{Introduction}
\label{sec:introduction}

High-dimensional elliptic PDEs arise in uncertainty quantification~\cite{cohen2015parametric}, stochastic control and Hamilton--Jacobi--Bellman equations~\cite{darbon2016algorithms,nakamura2021adaptive}, and kinetic or Fokker--Planck models~\cite{risken1989fokker}.
Their solutions often concentrate in low-order coordinate interactions, a low-dimensional oblique variable, or both.
A practical solver should therefore allocate trial-space width to this effective structure rather than uniformly to generic functions of all ambient variables.

Existing approaches exploit complementary forms of structure.
Sparse grids and dimension-adaptive variants use mixed regularity or influential coordinates~\cite{smolyak1963,griebel2010,gerstner2003dimadaptive,schillings2013sparse}, while tensor trains~\cite{oseledets2011tt} and sparse-polynomial Galerkin methods~\cite{cohen2011sparsegalerkin,cohen2015parametric} exploit low rank, anisotropy, or parametric regularity.
Deep PDE solvers avoid explicit tensor-product grids but generally require high-dimensional nonconvex training~\cite{raissi2019pinn,e2018deepritz,e2017deepbsde,wang2022pinn-ntk}.
Random feature methods (RFM)~\cite{rahimi2007random,rahimi2008uniform,bach2017kernelquadrature,rudi2017generalization} instead reduce PDE collocation to a linear least-squares fit once the nonlinear features are sampled~\cite{huang2006elm,chen2022rfm,wang2024elmhd}.
Full-dimensional RFM nevertheless assigns every feature to the ambient variables, leaving its approximation cost tied to full-domain function complexity~\cite{bach2017kernelquadrature,ema2019barron}.
What is missing is a computable, PDE-driven rule that converts detected structure into trial-space geometry while retaining the frozen-feature linear solve.

Analysis-of-variance (ANOVA) decompositions and Sobol indices describe interactions aligned with the coordinate axes~\cite{hoeffding1948,rabitz2000hdmr,sobol2001}.
Active subspaces describe oblique low-rank dependence~\cite{constantine2015active,constantine2014active,lam2020multifidelity,zahm2020gradient}.
We combine these descriptors in the Hierarchical Analysis-of-Variance Random Feature Method (HA-RFM).
Closed Sobol indices of the current PDE residual select higher-order coordinate blocks; the empirical covariance of the fitted-predictor gradient identifies oblique directions through prescribed spectral-energy and eigengap criteria.
All retained coordinate and active-subspace features are then fitted in one regularized least-squares problem, preserving the coupling imposed by the differential operator and boundary condition.
For semilinear equations, the same construction supplies the elliptic solve at each Picard step.

The paper makes three contributions.
First, we construct a PDE-driven trial space in which residual Sobol scores allocate coordinate blocks, predictor-gradient covariance allocates oblique features, and one joint linear solve couples all retained blocks.
Second, under explicit structural, elliptic-stability, block-stability, and sampled-least-squares hypotheses, we establish an error estimate for the HA-RFM approximation that separates structural truncation, random-feature approximation, sampling, and regularization.
Its consequences include polynomial width at fixed interaction order, a dimension-independent higher-order contribution when the selected family, aggregate component amplitudes, and stability factors are uniformly bounded, and estimates for residual-Sobol recovery, predictor-based subspace identification, coordinate--ridge augmentation, and inexact Picard iteration.
Third, we validate the two allocation mechanisms numerically.
Residual screening with two quasi-Monte Carlo (QMC) base matrices of size $4096$ recovers a three-pair support for all nine combinations of feature and QMC seeds, while predictor gradients recover oblique directions through dimension $50$.
In random-ridge tests, a below-$1\%$ width increase reduces the coordinate-block error by factors of $14$--$39$ and the equal-width full-dimensional RFM error by factors of $34$--$100$.
Semilinear computations demonstrate the Picard extension through dimension $100$, while dense and distributed-interaction tests identify when broader coordinate families are required.

Section~\ref{sec:background} introduces the RFM, ANOVA--Sobol, and active-subspace ingredients used to construct HA-RFM in Section~\ref{sec:method}.
Section~\ref{sec:analysis} follows the algorithmic dependency order from error and width guarantees to screening, subspace, and Picard estimates; complete proofs are provided in the supplement.
Section~\ref{sec:experiments} tests these mechanisms across coordinate-aligned, oblique, semilinear, and distributed-interaction problems, and Section~\ref{sec:conclusion} concludes.

\smallskip
\noindent\textbf{Notation.}
Unless otherwise stated, $d$ denotes the ambient dimension, $\Omega \subset \mathbb{R}^d$ the computational domain, and $x \in \Omega$ a point.
Set $\mathcal I_d:=\{1,\ldots,d\}$.
For product-domain statements, $\Omega=\prod_{i=1}^d\Omega_i$, where $\Omega_i\subset\mathbb R$ is the $i$th coordinate domain.
For $S \subseteq \mathcal I_d$, set $\Omega_S=\prod_{i\in S}\Omega_i$, and let $|S|$ and $x_S$ denote its cardinality and coordinate subvector; an ANOVA block indexed by $S$ depends only on $x_S$.
Calligraphic letters such as $\mathcal A$ denote families of coordinate subsets $S\subseteq\mathcal I_d$, and $\mathcal V_{S,M}$ denotes the span of $M$ random features supported on the coordinates in $S$, so $\dim\mathcal V_{S,M}\le M$.
The letter $C$, with descriptive subscripts when needed, denotes a positive scalar constant whose dependencies are stated.

\section{Background}
\label{sec:background}

We recall the notation for random-feature least-squares collocation, ANOVA--Sobol descriptions of coordinate-aligned interactions, and active subspaces for oblique low-rank dependence.
These ingredients enter the HA-RFM trial space constructed in Section~\ref{sec:method}.

\subsection{Random feature method for PDEs}
\label{subsec:bg-rfm}

For a positive integer $M$, fix an activation $\sigma$ and sample frequencies $\omega_m\in\mathbb R^d$ and biases $b_m\in\mathbb R$ once.
The resulting RFM trial space~\cite{rahimi2007random,rahimi2008uniform} is
\[
\mathcal V_M
=
\operatorname{span}\{\phi_m(x)=\sigma(\omega_m\cdot x+b_m):1\le m\le M\}.
\]
Only the coefficients $c_m\in\mathbb R$ in $u_M=\sum_{m=1}^M c_m\phi_m$ are fitted.
For an elliptic operator $L$ with right-hand side $f$, strong-form interior rows enforce $Lu_M(x_n^{\rm int})=f(x_n^{\rm int})$ at collocation points $x_n^{\rm int}\in\Omega$; for Dirichlet data $g$, boundary rows enforce $u_M(x_n^{\rm bc})=g(x_n^{\rm bc})$ at $x_n^{\rm bc}\in\partial\Omega$.
With $N$ collocation conditions, let $A\in\mathbb R^{N\times M}$ be the matrix whose rows evaluate either $L\phi_m$ or the boundary trace of $\phi_m$, let $y\in\mathbb R^N$ contain the corresponding data, and fix $\lambda_{\rm reg}\ge0$.
A schematic Tikhonov-regularized fit is
\[
c_{\lambda_{\rm reg}}
\in
\argmin_{c\in\R^M}
\|A c-y\|_2^2+\lambda_{\rm reg}\|c\|_2^2.
\]
The frozen nonlinear parameters therefore reduce the PDE solve to a linear coefficient problem~\cite{huang2006elm,chen2022rfm,wang2024elmhd}; for strong-form second-order equations, $\sigma$ is chosen with the required differentiability.

For a coordinate subset $S$ of size $k$ and $1\le m\le M$, define
$\phi_{S,m}(x)=\sigma(\omega_{S,m}\cdot x_S+b_{S,m})$, where $\omega_{S,m}\in\mathbb R^k$ and $b_{S,m}\in\mathbb R$.
This feature varies only in $x_S$ but remains a function on $\Omega$.
Sums of these subset-supported spaces retain the linear coefficient fit and make each blockwise approximation problem $k$-variate.
The analysis assumes $H^2(\Omega_S)$ approximation of each selected component and, for the width estimate, the model rate $M^{-s_{\rm rf}/(2k)}$ with random-feature smoothness $s_{\rm rf}>0$, up to the stated logarithmic and frequency-truncation factors~\cite{bach2017kernelquadrature,rudi2017generalization,ema2019barron}.
Thus the per-block exponent depends on $|S|$, whereas a full-dimensional RFM has $k=d$.

\subsection{Integral ANOVA decomposition and Sobol indices}
\label{subsec:bg-anova}

Let each $\mu_i$ be a probability measure on $\Omega_i$, and set $\mu=\otimes_{i=1}^d\mu_i$, the product measure on $\Omega=\prod_{i=1}^d\Omega_i$ used for ANOVA inner products; in the cube examples, $\mu$ is normalized Lebesgue measure.
All expectations and variances below are taken with respect to $\mu$.
We use $L^2_\mu(\Omega)$ for the ANOVA norm and retain $L^2(\Omega)$ for the unnormalized PDE norm; these norms agree on $(0,1)^d$ and differ only by a constant normalization on the other cube domains.
For $u\in L^2_\mu(\Omega)$, the integral ANOVA decomposition is~\cite{hoeffding1948,rabitz2000hdmr,caflisch1998}
\begin{equation}
\label{eq:anova-bg}
u(x) \;=\; \sum_{S\subseteq \mathcal I_d} u_S(x_S),
\end{equation}
where $u_\emptyset=\int_\Omega u\,\rd\mu$ and, for $S\ne\emptyset$,
$u_S=\mathbb E[u\,|\,x_S]-\sum_{T\subsetneq S}u_T$.
Each nonempty component has zero mean in every coordinate indexed by $S$; these constraints make the decomposition unique and imply $L^2_\mu$ orthogonality between distinct components.

For $\operatorname{Var}(u)>0$, the pure and closed Sobol indices are, respectively,
\[
\mathsf S_S
=
\frac{\|u_S\|_{L^2_\mu(\Omega)}^2}{\sum_{\emptyset\ne T\subseteq\mathcal I_d}\|u_T\|_{L^2_\mu(\Omega)}^2},
\qquad
T_S^{\rm closed}
=
\frac{\operatorname{Var}\bigl(\mathbb E[u\,|\,x_S]\bigr)}{\operatorname{Var}(u)}
=
\sum_{\emptyset\ne T\subseteq S} \mathsf S_T.
\]
The closed index is estimated directly by Saltelli pick-freeze estimators~\cite{sobol2001,saltelli2010,owen2014sobol}, and pure interactions can be recovered by M\"obius inversion when needed.
HA-RFM applies the closed indices to the current PDE residual to select coordinate blocks; Corollary~\ref{cor:screening-complexity} gives the gap and perturbation conditions for recovering an ideal correction family.

For $u\not\equiv0$ and a relative tail tolerance $\tau\ge0$, define
\[
K_\tau
=
\min\left\{K\in\{0,\ldots,d\}:\,
\left\|u-\sum_{|S|\le K}u_S\right\|_{L^2_\mu(\Omega)}
\le \tau\|u\|_{L^2_\mu(\Omega)}
\right\}.
\]
This superposition dimension is small when the $L^2_\mu$ tail above a fixed interaction order is small; for fixed $K_\tau$, the number of candidate blocks grows polynomially in $d$~\cite{rabitz2000hdmr,kuo2010}.
The analysis uses ANOVA orthogonality to separate truncation from per-block approximation, while a block-stability assumption controls the nonorthogonality of the raw feature spaces used computationally.

\subsection{Active subspaces}
\label{subsec:bg-as}

ANOVA structure is coordinate aligned, whereas active subspaces describe oblique low-dimensional dependence; the subscript ${\rm AS}$ marks active-subspace quantities.
For a scalar signal $q:\Omega\to\mathbb R$ with weak gradient $\nabla q\in L^2(\Omega,\mu;\mathbb R^d)$, define the covariance functional
\[
\mathbf C[q]:=\int_\Omega \nabla q(x)\nabla q(x)^\top\,\rd\mu(x)
\in\mathbb R^{d\times d}.
\]
When $q=u$, we write $\mathbf C_u:=\mathbf C[u]$.
Let $\nu_1\ge\nu_2\ge\cdots\ge\nu_d\ge0$ be the eigenvalues of $\mathbf C[q]$.
Its leading eigenspace contains the directions of strongest mean-square variation; a gap $\nu_{r_{\rm AS}}>\nu_{r_{\rm AS}+1}$, with $1\le r_{\rm AS}<d$, separates an $r_{\rm AS}$-dimensional active subspace~\cite{constantine2015active}.
Let $W\in\mathbb R^{d\times r_{\rm AS}}$ have orthonormal columns spanning this population active subspace.
When $q=u$ admits the ridge representation $u(x)=h(V^\top x)$ with an orthonormal matrix $V\in\mathbb R^{d\times r_{\rm AS}}$ and profile $h:\mathbb R^{r_{\rm AS}}\to\mathbb R$, write $y=V^\top x$ for the reduced variable and $\nabla_y h$ for the gradient of the profile.
The covariance then has the form $\mathbf C_u=V\mathbf B_{\rm red}V^\top$, where
\[
\mathbf B_{\rm red}:=\int_\Omega \nabla_y h(V^\top x)\nabla_y h(V^\top x)^\top\,\rd\mu(x)
\in\mathbb R^{r_{\rm AS}\times r_{\rm AS}}.
\]
Hence $\operatorname{range}(\mathbf C_u)\subseteq \operatorname{range}(V)$, with equality when $\mathbf B_{\rm red}$ is nonsingular; in that case, $V$ and $W$ span the same subspace.

Suppose $\mathbf C[q]$ has eigengap $\Delta=\nu_{r_{\rm AS}}-\nu_{r_{\rm AS}+1}>0$ and $\widehat{\mathbf C}$ is an empirical approximation with leading basis $\widehat W_{r_{\rm AS}}\in\mathbb R^{d\times r_{\rm AS}}$.
Writing $\Theta(\widehat W_{r_{\rm AS}},W)$ for the principal angles and $\|\cdot\|_{\rm op}$ for the operator norm, the Davis--Kahan bound~\cite{davis-kahan1970} gives, for a universal constant $C_{\rm DK}$,
\[
\|\sin\Theta(\widehat W_{r_{\rm AS}},W)\|_{\rm op}
\le
C_{\rm DK}\|\widehat{\mathbf C}-\mathbf C[q]\|_{\rm op}/\Delta.
\]
HA-RFM forms $\widehat{\mathbf C}$ from fitted-predictor gradients, activates a low-rank block only when prescribed energy and eigengap tests are met, and samples that block within the estimated eigenspace.
Section~\ref{subsec:method-active-subspace} makes this construction explicit, and Section~\ref{subsec:as-analysis} controls its subspace error by the perturbation bound above.

\section{Hierarchical ANOVA-RFM}
\label{sec:method}

HA-RFM allocates random features in two stages: residual Sobol indices select coordinate blocks, and fitted-predictor gradients identify an oblique low-rank block when the prescribed spectral tests are met.
All retained features are fitted jointly with frozen random parameters; we present this construction in computational order and then extend it to Picard iteration.

\subsection{Problem and coordinate-block trial space}
\label{subsec:method-setting}

We describe the construction for the unknown $u$ in the reaction--diffusion problem
\begin{equation}
\label{eq:pde}
-\Delta u + \kappa(x)\, u = f(x) \quad \text{in } \Omega, \qquad u = g \quad \text{on } \partial \Omega,
\end{equation}
on $\Omega=(0,1)^d$, and write
$L u := -\Delta u+\kappa u$.
Here $\kappa \in L^\infty(\Omega)$, $f$ is the source, and $g$ is the Dirichlet datum.
The analysis covers uniformly elliptic principal parts and bounded lower-order terms under the stability hypotheses of Section~\ref{sec:analysis}, and the experiments also use $(-1,1)^d$.
We use strong-form collocation with sufficiently differentiable data and features; boundary rows impose nonhomogeneous data, while the analysis applies the residual estimate after a fixed boundary lift.
Other bounded linear lower-order terms only change the operator evaluations in the collocation rows.

We use the ANOVA notation from Section~\ref{subsec:bg-anova} to organize the selected coordinate subsets.
For $S\subseteq\mathcal I_d$ with $|S|=k$, the subset-supported random features are
\begin{equation}
\label{eq:basis}
\phi_{S,m}(x)
=
\sigma\bigl(\omega_{S,m}\cdot x_S+b_{S,m}\bigr),
\qquad
m=1,\dots,M_k,
\end{equation}
where $M_k$ is the number of features assigned to each selected order-$k$ subset, $\sigma\in C^2(\R)$ is fixed, $\omega_{S,m}\in\R^k$, and $b_{S,m}\in\R$.
Section~\ref{sec:experiments} specifies their sampling distributions.
The empty subset is always included with $M_0=1$ and $\phi_{\emptyset,1}\equiv 1$.
For a maximum interaction order $1\le K_{\max}\le d$, the selected families are initialized and updated by
\[
\mathcal A_1=\{\emptyset\}\cup\{\{i\}:1\le i\le d\},
\qquad
\mathcal A_{K+1}=\mathcal A_K\cup\mathcal S_{K+1},
\quad 1\le K<K_{\max},
\]
where $\mathcal S_{K+1}$ is selected by the residual screen below.
For real coefficients $\bm c=\{c_{S,m}:S\in\mathcal A_K,\ 1\le m\le M_{|S|}\}$, the order-$K$ HA-RFM trial function is
\begin{equation}
\label{eq:ansatz}
v_{\bm c}^{(K)}(x)
=
\sum_{S\in\mathcal A_K}
\sum_{m=1}^{M_{|S|}} c_{S,m}\,\phi_{S,m}(x).
\end{equation}
The constant contributes one degree of freedom, absorbed in the width estimates, and every other basis function varies only in $x_S$.
For the operator in~\eqref{eq:pde},
\[
\Delta \phi_{S,m}(x)
=
\left(\sum_{j\in S}\omega_{S,m,j}^2\right)
\sigma''(\omega_{S,m}\cdot x_S+b_{S,m}).
\]
Here $\omega_{S,m,j}$ denotes the frequency component multiplying $x_j$; derivatives in the inactive coordinates vanish.
Thus the PDE rows can be assembled without ever forming full $d$-variate random frequencies for a coordinate block.

The ANOVA subsets index coordinate structure, but the numerical blocks are raw and uncentered, so their spans may overlap.
The joint Tikhonov fit below regularizes this representation, and the analysis controls the summed trial function rather than individual fitted blocks.

\subsection{Residual screening}
\label{subsec:method-screening}

The family is enriched from the constant and singleton blocks; retaining all singletons produces the $O(dM_1)$ width term.
At order $K$, let $\widehat{\bm c}_K$ be the coefficients returned by the joint solve in Section~\ref{subsec:method-ls} and define the fitted coordinate predictor $\widehat u_K:=v_{\widehat{\bm c}_K}^{(K)}$.
Candidate subsets of order $K+1$ are screened using the residual
\begin{equation}
\label{eq:res}
\mathfrak r_K(x)
:=
f(x)-L\widehat u_K(x).
\end{equation}
Prescribed nonnegative held-out functionals $\mathfrak E_{\rm int}$ and $\mathfrak E_{\rm bc}$ measure the interior and boundary residuals on the current trial space.
For tolerances $\varepsilon_{\rm int}^{\rm stop},\varepsilon_{\rm bc}^{\rm stop}\ge0$, enrichment stops when both tests meet their respective tolerances.
For each candidate $S$ with $|S|=K+1$, HA-RFM estimates the closed residual Sobol index.
Let $X\sim\mu$ denote the reference random input on $\Omega$ and let $X_S$ be its coordinate subvector.
When $\operatorname{Var}(\mathfrak r_K)>0$, this index is
\begin{equation}
\label{eq:sobol-closed}
T_S^{\rm closed}(\mathfrak r_K)
=
\frac{
\operatorname{Var}\bigl(\E[\mathfrak r_K\,|\,X_S]\bigr)
}{
\operatorname{Var}(\mathfrak r_K)
}.
\end{equation}
The estimate $\widehat T_S^{\rm closed}(\mathfrak r_K)$ uses a Saltelli pick-freeze construction~\cite{sobol2001,saltelli2010,owen2014sobol} with two randomized quasi-Monte Carlo (QMC) base matrices of size $M_{\rm QMC}$~\cite{owen1997scrambled} and the corresponding $S$-hybrid matrices.
Scores are truncated below at zero; if the empirical residual variance or all truncated scores vanish, every screening score is set to zero.
Fix the relative screening threshold $\vartheta_{\rm Sob}\in(0,1)$.
The selected next-level family is
\begin{equation}
\label{eq:active-set}
\mathcal S_{K+1}
=
\left\{
S:\ |S|=K+1,\;
\widehat T_S^{\rm closed}(\mathfrak r_K)
>
\vartheta_{\rm Sob}\,\max_{|S'|=K+1}\widehat T_{S'}^{\rm closed}(\mathfrak r_K)
\right\}.
\end{equation}
With a candidate cap, only the highest-scoring candidates up to that cap are retained; otherwise~\eqref{eq:active-set} is used unchanged.
The update $\mathcal A_{K+1}=\mathcal A_K\cup\mathcal S_{K+1}$ stops if $\mathcal S_{K+1}=\emptyset$, so the search targets hierarchically detectable interactions and does not proceed to order $K+2$ after an empty level.
Full order-$k$ screening costs $O(M_{\rm QMC}\binom{d}{k})$ residual evaluations, up to shared base evaluations, independently of the fitted trial-space width.
Section~\ref{sec:analysis} gives the closed-index gap and perturbation conditions for correct selection.

\subsection{Joint coefficient fitting}
\label{subsec:method-ls}
For each selected family, a single least-squares (LS) problem fits all block coefficients.
Let $\{x_n^{\rm int}\}_{n=1}^{N_{\rm int}}\subset\Omega$ be interior collocation points and let $\{x_n^{\rm bc}\}_{n=1}^{N_{\rm bc}}\subset\partial\Omega$ be boundary samples.
Let $\mathcal D$ denote the current frozen feature family and set $N_{\mathcal D}:=|\mathcal D|$.
Before the active-subspace augmentation, $\mathcal D=\{\phi_{S,m}:S\in\mathcal A_K,\ 1\le m\le M_{|S|}\}$; after that augmentation, the global active-subspace features are added to the same feature family.
For columns indexed by $\psi_j\in\mathcal D$, define $\AAm_{\rm int}\in\R^{N_{\rm int}\times N_{\mathcal D}}$ and $\AAm_{\rm bc}\in\R^{N_{\rm bc}\times N_{\mathcal D}}$ by
\[
(\AAm_{\rm int})_{n,j}
=
(L\psi_j)(x_n^{\rm int}),
\qquad
(\AAm_{\rm bc})_{n,j}
=
\psi_j(x_n^{\rm bc}).
\]
The right-hand sides $\bm f\in\R^{N_{\rm int}}$ and $\bm g\in\R^{N_{\rm bc}}$ have entries $(\bm f)_n=f(x_n^{\rm int})$ and $(\bm g)_n=g(x_n^{\rm bc})$.
Let $\omega_{\rm bc}>0$ be the boundary penalty and $\lambda_{\rm reg}\ge0$ the Tikhonov parameter.
The coefficient vector is obtained from
\begin{equation}
\label{eq:joint-ls}
\widehat{\bm c}
\in
\argmin_{\bm c\in\R^{N_{\mathcal D}}}
\frac{1}{N_{\rm int}}\|\AAm_{\rm int}\bm c-\bm f\|_2^2
+\omega_{\rm bc}^2\frac{1}{N_{\rm bc}}\|\AAm_{\rm bc}\bm c-\bm g\|_2^2
+\lambda_{\rm reg}\|\bm c\|_2^2.
\end{equation}
The associated fitted function is
\[
\widehat u_{\mathcal D}(x)=\sum_{j=1}^{N_{\mathcal D}}\widehat c_j\psi_j(x).
\]
For the coordinate-only family through order $K$, this function is the predictor $\widehat u_K$ defined above.
The normalization sets the scaling of the empirical Gram matrices used in the analysis.
Equivalently, QR with column pivoting is applied in the least-squares sense to the unnormalized augmented system
\[
[\AAm_{\rm int};\,\omega_{\rm bc,impl}\AAm_{\rm bc};\,\sqrt{\lambda_{\rm impl}} I]\bm c
\approx
[\bm f;\,\omega_{\rm bc,impl}\bm g;\,0],
\]
where $I$ is the $N_{\mathcal D}\times N_{\mathcal D}$ identity matrix, $\omega_{\rm bc,impl}=\omega_{\rm bc}\sqrt{N_{\rm int}/N_{\rm bc}}$, and $\lambda_{\rm impl}=N_{\rm int}\lambda_{\rm reg}$.
Tikhonov regularization controls cross-block correlations, while pivoted QR supplies a rank-revealing solve.

\paragraph{Why the fit is joint}
\label{subsec:method-why-joint}
Boundary rows couple the selected blocks: under exact homogeneous Dirichlet enforcement, the constraint
\begin{equation}
\label{eq:bc-sum}
\sum_{S\in\mathcal A_K}v_S(x_S)=0
\qquad \text{on }\partial\Omega
\end{equation}
acts on their sum, where $v_S$ denotes the raw contribution of block $S$, not an orthogonal ANOVA component.
A sequential level-one solve would impose this condition using only a constant and univariate blocks, which collapses the available space as follows.

\begin{lemma}[univariate-block collapse on the cube]
\label{lem:bc-collapse}
Let $\Omega=(0,1)^d$ with $d\ge2$, let $c\in\R$, and let $v_1,\dots,v_d\in C([0,1])$ satisfy
\begin{equation}
\label{eq:lem-faces}
c+\sum_{i=1}^d v_i(x_i)=0
\qquad \text{on every face of }\partial\Omega .
\end{equation}
Then each $v_i$ is constant on $[0,1]$ and $c+\sum_i v_i\equiv0$.
If, in addition, each $v_i$ has zero mean on $[0,1]$, then $v_i\equiv0$ for every $i$ and $c=0$.
\end{lemma}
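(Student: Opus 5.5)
Since the $v_i$ are continuous on $[0,1]$, the identity \eqref{eq:lem-faces} holds on the closed faces $\{x_j=0\}$ and $\{x_j=1\}$. The plan is to restrict it to a face on which one chosen coordinate is free, and to use $d\ge2$ to fix some other coordinate at a boundary value. Fix $i$, and choose $j\ne i$, which exists because $d\ge2$. On the face $\{x_j=0\}$ the coordinate $x_i$ ranges over all of $[0,1]$, while the remaining coordinates $x_l$ with $l\ne i,j$ can be fixed arbitrarily, for instance at $0$. This gives
\[
v_i(x_i) = -c - v_j(0) - \sum_{l\ne i,j} v_l(0)
\qquad\text{for all } x_i\in[0,1].
\]
The right-hand side does not depend on $x_i$, so $v_i$ is constant on $[0,1]$.

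Write $v_i\equiv a_i$ for each $i$. The function $c+\sum_i v_i(x_i)$ is then the constant $c+\sum_i a_i$. Evaluating \eqref{eq:lem-faces} at any boundary point, such as the origin, gives $c+\sum_i a_i=0$, and hence $c+\sum_i v_i\equiv0$ on all of $\overline\Omega$.

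For the last assertion, a constant function on $[0,1]$ with zero mean must vanish identically, so $a_i=0$ for every $i$. The identity $c+\sum_i a_i=0$ then forces $c=0$.

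No step here is a genuine obstacle. The only points to check are that $d\ge2$ is exactly what makes a face with $x_i$ free available (for $d=1$ the boundary is just $\{0,1\}$, and the conclusion fails), and that continuity lets us use the closed faces, so endpoint values such as $v_j(0)$ are well defined. I would close by remarking that this is why the level-one boundary constraint \eqref{eq:bc-sum} collapses the univariate blocks, which motivates the joint fit.
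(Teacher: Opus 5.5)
Your proof is correct and takes the same approach as the paper's. Restricting to a face $\{x_j=0\}$ with $j\ne i$ shows that $v_i$ is constant, evaluating at a boundary point gives $c+\sum_i a_i=0$, and the zero-mean condition then forces $a_i=0$ and $c=0$. Your added remark that the conclusion fails for $d=1$ is accurate.
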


\begin{proof}
On the face $\{x_k=0\}$, $\sum_{j\ne k}v_j(x_j)=-c-v_k(0)$; varying any $x_j$ with $j\ne k$ shows that $v_j$ is constant.
Choosing $k\ne j$ for each $j$ and substituting proves $c+\sum_i v_i\equiv0$.
The zero-mean conditions then force every $v_i$, and consequently $c$, to vanish.
\end{proof}

Thus levelwise boundary enforcement can overconstrain lower-order blocks before higher-order cancellation is available, whereas~\eqref{eq:joint-ls} lets all retained blocks share the interior and boundary residuals.

\subsection{Predictor covariance and active block}
\label{subsec:method-active-subspace}

Coordinate-aligned ANOVA blocks can require many selected interactions for a function of the form $u(x)=h(W^\top x)$ when the columns of $W\in\R^{d\times r_{\rm AS}}$ are oblique to the coordinate axes.
Such a function can have nonzero ANOVA components at many orders, although its intrinsic variable is $r_{\rm AS}$-dimensional.
The active-subspace augmentation addresses this case by estimating the dominant gradient eigenspace and adding random features whose frequencies lie in that eigenspace.

After coordinate enrichment, let $K^\star\le K_{\max}$ be the terminal order, $\mathcal A^\star:=\mathcal A_{K^\star}$ the terminal coordinate family, and $\widetilde u:=\widehat u_{K^\star}$ the pre-augmentation predictor.
Draw $\{x_n^{\rm AS}\}_{n=1}^{N_{\rm AS}}\subset\Omega$ from $\mu$, independently of the collocation and screening samples, and set $\bm p_n:=\nabla\widetilde u(x_n^{\rm AS})\in\R^d$.
The empirical predictor covariance $\widehat{\mathbf C}\in\R^{d\times d}$ is
\[
\widehat{\mathbf C}
=
\frac{1}{N_{\rm AS}}\sum_{n=1}^{N_{\rm AS}} \bm p_n\bm p_n^\top .
\]
The eigenvalues are ordered as
$\widehat\nu_1\ge\widehat\nu_2\ge\cdots\ge\widehat\nu_d\ge0$.
Given an energy threshold $\tau_{\rm AS}\in(0,1)$ and an eigengap-ratio threshold $\gamma_{\rm gap}>1$, we search over $1\le r_{\rm AS}<d$ and select the smallest rank satisfying
\[
\sum_{i=1}^{r_{\rm AS}}\widehat\nu_i
\ge
\tau_{\rm AS}\sum_{i=1}^d\widehat\nu_i,
\qquad
\frac{\widehat\nu_{r_{\rm AS}}}{\widehat\nu_{r_{\rm AS}+1}}
\ge
\gamma_{\rm gap}.
\]
The ratio is $+\infty$ when $\widehat\nu_{r_{\rm AS}+1}=0<\widehat\nu_{r_{\rm AS}}$, while a zero-over-zero pair is inadmissible.
This ratio criterion is the computable counterpart of the additive gap used in the perturbation analysis.
When the test selects a rank $r_{\rm AS}$, let $\widehat W_{r_{\rm AS}}\in\R^{d\times r_{\rm AS}}$ contain the leading $r_{\rm AS}$ orthonormal eigenvectors.
We then append a block of $M_{\rm glob}$ global features,
\[
\phi_{{\rm glob},m}(x)
=
\sigma((\widehat W_{r_{\rm AS}}\xi_m)\cdot x+b_{{\rm glob},m}),
\qquad
m=1,\dots,M_{\rm glob},
\]
where $\xi_m\in\R^{r_{\rm AS}}$ is sampled in the estimated coordinate system and $b_{{\rm glob},m}\in\R$ is its bias.
The experiments use the coordinate-block sampling law with $r_{\rm AS}$ in place of $k$.
The coefficients of the subset-supported and global features are then refitted in the same joint least-squares problem~\eqref{eq:joint-ls}.

Thus predictor gradients alter the geometry of the trial space without introducing a nonlinear coefficient optimization.

\subsection{HA-RFM algorithm}

Algorithm~\ref{alg:harfm} assembles the coordinate enrichment, joint solves, stopping rules, and active-subspace test; Section~\ref{sec:experiments} gives the numerical settings and widths.

\begin{algorithm}[H]
\caption{Hierarchical ANOVA-RFM}
\label{alg:harfm}
\small
\begin{algorithmic}[1]
\Statex \textbf{Input:} PDE data $(L,f,g)$ and maximum order $1\le K_{\max}\le d$.
\Statex \textbf{Input:} Feature widths $M_1,\ldots,M_{K_{\max}},M_{\rm glob}$ and sampling laws.
\Statex \textbf{Input:} Sample sizes $N_{\rm int},N_{\rm bc},N_{\rm AS},M_{\rm QMC}$.
\Statex \textbf{Input:} Parameters $\omega_{\rm bc},\lambda_{\rm reg},\vartheta_{\rm Sob},\tau_{\rm AS},\gamma_{\rm gap}$; optional candidate cap; held-out functionals $\mathfrak E_{\rm int},\mathfrak E_{\rm bc}$; tolerances $\varepsilon_{\rm int}^{\rm stop},\varepsilon_{\rm bc}^{\rm stop}$.
\Statex \textbf{Output:} Approximation $\widehat u_{\rm HA}$ and terminal coordinate family $\mathcal A^\star$.
\State $\mathcal A\gets\{\emptyset\}\cup\{\{i\}:1\le i\le d\}$; instantiate the constant and singleton features.
\For{$K=1,\dots,K_{\max}$}
  \State Fit $\widehat u_K$ on the current $\mathcal A$ by the joint least-squares problem~\eqref{eq:joint-ls}; set $\widehat u_{\rm HA}\gets\widehat u_K$.
  \State \textbf{if} $\mathfrak E_{\rm int}(\widehat u_K)\le\varepsilon_{\rm int}^{\rm stop}$ and $\mathfrak E_{\rm bc}(\widehat u_K)\le\varepsilon_{\rm bc}^{\rm stop}$, or if $K=K_{\max}$, \textbf{break}.
  \State Estimate $\widehat T_S^{\rm closed}(\mathfrak r_K)$ for $|S|=K+1$, select $\mathcal S_{K+1}$ by~\eqref{eq:active-set}, and impose a prescribed candidate cap when used.
  \State \textbf{if} $\mathcal S_{K+1}=\emptyset$, \textbf{break}; otherwise set $\mathcal A\gets\mathcal A\cup\mathcal S_{K+1}$ and instantiate new features.
\EndFor
\State Set $K^\star\gets K$, $\mathcal A^\star\gets\mathcal A$, and $\widetilde u\gets\widehat u_{K^\star}$.
\State Form $\widehat{\mathbf C}$ from $\nabla\widetilde u$ and apply the energy/eigengap test.
\State \textbf{if} the test selects a rank $r_{\rm AS}<d$, append the global active-subspace features, re-solve~\eqref{eq:joint-ls}, and update $\widehat u_{\rm HA}$.
\State \Return $\widehat u_{\rm HA}$ and $\mathcal A^\star$.
\end{algorithmic}
\end{algorithm}

Steps~1--7 construct the terminal coordinate state, Step~8 applies the spectral test, and Step~9 appends the oblique block when indicated.
Section~\ref{sec:analysis} follows this order from selected-space error and screening recovery to covariance perturbation and the augmented solve.

\subsection{Picard extension}
\label{subsec:method-picard}

The Picard extension is the fixed-point wrapper that applies HA-RFM after each successive linearization.
The formulas below use first-order Taylor, hence Newton-type, linearizations of the nonlinear terms; the name Picard refers to the resulting sequence of linear elliptic solves.
The analysis requires only that the exact update map induced by the chosen linearization be contractive on the stated invariant set.
At iteration $\ell\ge0$, let $L_\ell$ and $f_\ell$ denote the operator and source obtained by linearizing at $u^\ell$; the next iterate solves
\[
L_\ell u^{\ell+1}=f_\ell,
\]
with the prescribed boundary data.
Each inner problem remains a linear coefficient fit and is assumed to satisfy the elliptic and sampled-LS stability inputs of Section~\ref{sec:analysis}.
For a differentiable reaction nonlinearity $\mathcal N:\R\to\R$, the affine approximation is
\[
\mathcal N(u^{\ell+1})
\approx
\mathcal N(u^\ell)+\mathcal N'(u^\ell)(u^{\ell+1}-u^\ell).
\]
For a differentiable gradient nonlinearity $\mathcal H:\R^d\to\R$, let $p\in\R^d$ denote its argument and $\nabla_p\mathcal H$ its gradient; the expansion is
\[
\mathcal H(\nabla u^{\ell+1})
\approx
\mathcal H(\nabla u^\ell)
+\,
\nabla_p \mathcal H(\nabla u^\ell)\cdot\nabla(u^{\ell+1}-u^\ell).
\]
The experiments apply these formulas to the linear-quadratic regulator Hamilton--Jacobi--Bellman equation and Allen--Cahn problem.
Theorem~\ref{thm:picard-inexact} propagates the inner HA-RFM error for contractive Picard maps, completing the link from the algorithm to the analysis.

\section{Approximation analysis}
\label{sec:analysis}

The analysis follows Algorithm~\ref{alg:harfm}: selected-space inputs yield $L^2$ and gradient error estimates; these lead to width, Sobol-screening recovery, active-subspace recovery, the augmented-solve estimate, and Picard propagation.
The estimates separate structural approximation, random-feature error, sampled least-squares stability, and eigengap inputs; complete proofs appear in the supplement.

\subsection{Analytic setting}
\label{subsec:analysis-inputs}

Fix a family $\mathcal A^\star$ returned by Algorithm~\ref{alg:harfm} and set $K^\star:=\max_{S\in\mathcal A^\star}|S|\le K_{\max}$; the screening analysis below gives conditions for recovering this family.
The empty component is represented by a constant, so $\emptyset\in\mathcal A^\star$, $n_0^{\rm blk}=M_0=1$.
For $k\ge2$, set $n_k^{\rm blk}:=|\{S\in\mathcal A^\star:\ |S|=k\}|$, and let
\[
\mathcal V_{\mathcal A^\star}
=
\sum_{S\in\mathcal A^\star}\mathcal V_{S,M_{|S|}}
\]
be the selected random-feature space.
All singleton blocks are retained, so $n_1^{\rm blk}=d$; the constant is absorbed into asymptotic width estimates.
We write
\[
N_{K^\star}:=\sum_{S\in\mathcal A^\star}M_{|S|}
\]
for the nominal selected width, namely the number of fitted coefficients before any rank deficiency is removed.
For nonhomogeneous Dirichlet data, the estimates apply after a fixed boundary lift; $L$ denotes the current linear or successive-linearized elliptic operator.
The analysis applies to uniformly elliptic strong-form operators
\[
Lv=-\sum_{i,j=1}^d a_{ij}(x)\partial_{ij}v+b(x)\cdot\nabla v+\kappa(x)v,
\]
where $\mathbf a(x):=(a_{ij}(x))\in\mathbb R^{d\times d}$ is the uniformly elliptic principal coefficient matrix, $b(x)\in\mathbb R^d$ is the drift, and $\kappa(x)\in\mathbb R$ is the reaction coefficient; all coefficients are regular enough for the stated residual and boundary-stability estimates.
Let $\operatorname{Tr}$ be the trace operator and $\mathcal Y_{\partial}$ a continuous trace space on $\partial\Omega$, for example $H^{1/2}(\partial\Omega)$.
The passage from the empirical boundary seminorm to $\|\operatorname{Tr}\cdot\|_{\mathcal Y_{\partial}}$ is a selected-space spectral-equivalence input.
We use the residual/trace quantity
\begin{equation}
\label{eq:combined-residual-norm}
\|v\|_{\mathcal R}
:=
\|Lv\|_{H^{-1}(\Omega)}
+\omega_{\rm th}\|\operatorname{Tr}v\|_{\mathcal Y_{\partial}} .
\end{equation}
Here $\omega_{\rm th}>0$ is the fixed trace scaling used in the analysis; the boundary penalty used in the sampled least-squares problem is accounted for through the empirical spectral-equivalence input below.

The estimate uses three inputs: selected-family approximation, residual/trace and block stability, and sampled least-squares (LS) stability.
A bare $\eta\in(0,1)$ is a generic failure budget; the subscripts in $\eta_{\rm feat}$, $\eta_{\rm LS}$, $\eta_{\rm scr}$, $\eta_{\rm AS,feat}$, and $\eta_{\rm AS}$ identify the corresponding feature, LS, screening, active-feature, and subspace events, and each combined budget is assumed to be less than one.
Descriptively subscripted $C$'s are positive constants whose stated dimension dependence is retained; their detailed operator, Gram, coherence, and sample-allocation dependencies are recorded in the supplement.

\begin{assumption}[selected ANOVA approximation]
\label{ass:selected-anova}
Let $u=\sum_{S\subseteq\mathcal I_d}u_S$ be the integral ANOVA decomposition with respect to the product reference measure $\mu$.
For a selected family $\mathcal A^\star$ of maximum order $K^\star$, set $u_{\mathcal A^\star}=\sum_{S\in\mathcal A^\star}u_S$ and let $\tau_{\rm sol},\tau_{\rm res}\ge0$ be the solution-tail and residual-tail tolerances.
Assume
\begin{equation}
\label{eq:assn-supdim}
\|u-u_{\mathcal A^\star}\|_{L^2(\Omega)}
\le
\tau_{\rm sol}\|u\|_{L^2(\Omega)},\qquad
\|u-u_{\mathcal A^\star}\|_{\mathcal R}
\le
\tau_{\rm res}\|u\|_{L^2(\Omega)} .
\end{equation}
For each $S\in\mathcal A^\star$ with $|S|=k$, let $e^{\rm rf}_{S,M_k}(R_{\rm rf},\eta)\ge0$ denote an $H^2(\Omega_S)$ approximation-error bound for the sampled feature span on an event with failure probability $\eta$, where $R_{\rm rf}>0$ is the random-feature frequency radius, and assume
\begin{equation}
\label{eq:assn-rf}
\inf_{v_S\in\mathcal V_{S,M_k}}\|u_S-v_S\|_{H^2(\Omega_S)}
\le
e^{\rm rf}_{S,M_k}(R_{\rm rf},\eta).
\end{equation}
Let $\bm c^\dagger$ be the concatenated comparison coefficient vector, satisfying
\[
\|\bm c^\dagger\|_2
\le
B_{\rm coef}(M_1,\dots,M_{K^\star},d,\eta).
\]
\end{assumption}

For a spectral or Barron-type component class matched to the feature distribution, we use
\[
e^{\rm rf}_{S,M_k}(R_{\rm rf},\eta)
\le
a_S\{E_{\rm trunc}(R_{\rm rf};u_S)+M_k^{-s_{\rm rf}/(2k)}\ell_k(\eta)\},
\]
where $a_S>0$ is the component amplitude, $E_{\rm trunc}$ the finite-window bias, and $\ell_k(\eta)>0$ a logarithmic factor.

\begin{assumption}[elliptic and block stability]
\label{ass:elliptic-block}
The current operator satisfies the boundary-stable estimate
\begin{equation}
\label{eq:assn-coercive}
\|e\|_{L^2(\Omega)}
\le
C_{\rm ell}^{\rm bd}\|e\|_{\mathcal R}
\qquad
\forall e\in H^1(\Omega),\ \operatorname{Tr}e\in\mathcal Y_{\partial}.
\end{equation}
Moreover, selected component errors $z_S\in H^2(\Omega_S)$, identified with the lifts $x\mapsto z_S(x_S)$ on $\Omega$, obey
\begin{equation}
\label{eq:assn-block-stability}
\left\|\sum_{S\in\mathcal A^\star}z_S\right\|_{\mathcal R}
\le
C_{\rm op}\Gamma_{K^\star}^{1/2}
\left(\sum_{S\in\mathcal A^\star}\|z_S\|_{H^2(\Omega_S)}^2\right)^{1/2}.
\end{equation}
\end{assumption}

The factor $\Gamma_{K^\star}$ measures block nonorthogonality in the residual/trace norm; the supplement gives a sampled coherence indicator and its limitations.

\begin{assumption}[sampled LS stability]
\label{ass:ls-stability}
Let $\widehat u$ be the Tikhonov-regularized collocation least-squares solution in $\mathcal V_{\mathcal A^\star}$ and let $u^\dagger$ be a comparison function in the same space.
Assume that, with probability at least $1-\eta_{\rm LS}$ over the interior and boundary samples,
\begin{equation}
\label{eq:assn-ls-stability}
\begin{aligned}
\|\widehat u-u^\dagger\|_{\mathcal R}
&\le
C_{\rm ls}
\left[
\|u^\dagger-u\|_{\mathcal R}
+C_{\rm samp}\chi_N
+C_{\rm reg}\lambda_{\rm reg}^{1/2}B_{\rm coef}
\right],
\\
	\chi_N
	&:=\chi_N(\eta_{\rm LS})
	=
\left(N_{\rm int}^{-1/2}+N_{\rm bc}^{-1/2}\right)
	\sqrt{\log(1/\eta_{\rm LS})} .
\end{aligned}
\end{equation}
\end{assumption}
The subscript in $\chi_N$ records dependence on $(N_{\rm int},N_{\rm bc})$; the displayed factor isolates the nominal sampling rate, while width, row-bound, and coherence dependence remains in $C_{\rm samp}$.
The supplement derives this input from population--empirical Gram spectral equivalence and gives a sufficient matrix concentration condition.
Any dimension dependence of the LS constants or $B_{\rm coef}$ is inherited by the width bounds.

\subsection{Selected-space error and width}
\label{subsec:approximation}
\label{subsec:complexity}

For every order with $n_k^{\rm blk}>0$, define the aggregate random-feature error
\[
\mathcal E_k(R_{\rm rf},M_k,\eta)^2
:=
\sum_{\substack{S\in\mathcal A^\star\\ |S|=k}}
e^{\rm rf}_{S,M_k}
\left(R_{\rm rf},\frac{\eta}{K^\star n_k^{\rm blk}}\right)^2,
\]
and set $\mathcal E_k=0$ when $n_k^{\rm blk}=0$.
This orderwise allocation makes all per-subset events simultaneous with total failure budget at most $\eta$; Theorem~\ref{thm:approximation} uses $\eta=\eta_{\rm feat}$.
The next result composes the three inputs above into the error estimate for the terminal coordinate predictor.

\begin{theorem}[selected-space error under boundary-stable LS]
\label{thm:approximation}
Suppose Assumptions~\ref{ass:selected-anova}--\ref{ass:ls-stability} hold for the order-$K^\star$ selected space.
Then, with probability at least $1-\eta_{\rm feat}-\eta_{\rm LS}$ over the random features and collocation samples,
\begin{align}
\label{eq:approx-bound}
\|u-\widehat u_{K^\star}\|_{L^2(\Omega)}
&\le
\left(\tau_{\rm sol}
+C_{\rm ell}^{\rm bd}C_{\rm ls}\tau_{\rm res}\right)
\|u\|_{L^2(\Omega)}
\nonumber\\
&\quad
+C_{\rm ell}^{\rm bd}(1+C_{\rm ls})
\Bigl[
C_{\rm op}\Gamma_{K^\star}^{1/2}
\left(
\sum_{k=1}^{K^\star}
\mathcal E_k(R_{\rm rf},M_k,\eta_{\rm feat})^2
\right)^{1/2}
\nonumber\\
&\qquad\qquad
+C_{\rm samp}\chi_N
+C_{\rm reg}\lambda_{\rm reg}^{1/2}B_{\rm coef}
\Bigr].
\end{align}
\end{theorem}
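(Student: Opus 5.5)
The proof is a triangle-inequality composition through an explicit comparison function $u^\dagger\in\mathcal V_{\mathcal A^\star}$. For each $S\in\mathcal A^\star$ with $|S|=k$, Assumption~\ref{ass:selected-anova}, applied with the per-subset failure budget $\eta_{\rm feat}/(K^\star n_k^{\rm blk})$, supplies $v_S\in\mathcal V_{S,M_k}$ with $\|u_S-v_S\|_{H^2(\Omega_S)}\le e^{\rm rf}_{S,M_k}(R_{\rm rf},\eta_{\rm feat}/(K^\star n_k^{\rm blk}))$. For the empty set, $v_\emptyset=u_\emptyset$ is exact. A union bound over the at most $\sum_k n_k^{\rm blk}$ subsets gives a single event of probability at least $1-\eta_{\rm feat}$ on which all these bounds hold. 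Set $u^\dagger:=\sum_{S\in\mathcal A^\star}v_S$, and take its concatenated coefficients to be the vector $\bm c^\dagger$ with $\|\bm c^\dagger\|_2\le B_{\rm coef}$ assumed in Assumption~\ref{ass:selected-anova}.

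Next, split the error as
\[
u-\widehat u_{K^\star}=(u-u_{\mathcal A^\star})+(u_{\mathcal A^\star}-u^\dagger)+(u^\dagger-\widehat u_{K^\star}).
\]
The first term is bounded in $L^2$ by $\tau_{\rm sol}\|u\|_{L^2}$ via~\eqref{eq:assn-supdim}. The other two terms are measured in $\|\cdot\|_{\mathcal R}$ and converted to $L^2$ by~\eqref{eq:assn-coercive}. The middle term is the sum of the component errors $z_S=u_S-v_S$, so~\eqref{eq:assn-block-stability} gives
\[
\|u_{\mathcal A^\star}-u^\dagger\|_{\mathcal R}\le C_{\rm op}\Gamma_{K^\star}^{1/2}\Bigl(\sum_{k=1}^{K^\star}\mathcal E_k^2\Bigr)^{1/2}=:D.
\]
For the last term, intersect with the LS event of probability at least $1-\eta_{\rm LS}$, which yields the stated total probability. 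Assumption~\ref{ass:ls-stability} then gives
\[
\|\widehat u-u^\dagger\|_{\mathcal R}\le C_{\rm ls}\bigl[\|u^\dagger-u\|_{\mathcal R}+C_{\rm samp}\chi_N+C_{\rm reg}\lambda_{\rm reg}^{1/2}B_{\rm coef}\bigr],
\]
and the triangle inequality bounds $\|u^\dagger-u\|_{\mathcal R}\le\tau_{\rm res}\|u\|_{L^2}+D$.

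Collecting terms, $C_{\rm ell}^{\rm bd}$ multiplies $D$ once from the middle term and $C_{\rm ls}$ times from the LS term, which produces the factor $(1+C_{\rm ls})$. The residual tail produces $C_{\rm ell}^{\rm bd}C_{\rm ls}\tau_{\rm res}$. The sampling and regularization terms appear with coefficient $C_{\rm ell}^{\rm bd}C_{\rm ls}\le C_{\rm ell}^{\rm bd}(1+C_{\rm ls})$, so they fit into the stated bracket.

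The main obstacle is justifying the application of~\eqref{eq:assn-coercive}. The differences involved must lie in $H^1(\Omega)$ with traces in $\mathcal Y_\partial$, which holds here because the lifted $H^2(\Omega_S)$ components and the $C^2$ features are regular enough. A second point is that the constant block and the nonhomogeneous boundary lift must be handled so that~\eqref{eq:assn-block-stability} is applied only to the nonempty blocks. Both are bookkeeping issues, not analytic difficulties, since the substantive content is packaged in the three assumptions.
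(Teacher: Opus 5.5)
Your proposal is correct and follows essentially the same route as the paper's proof. Both arguments use the exact constant block, per-subset comparison functions under the orderwise union bound, the split $u-\widehat u_{K^\star}=(u-u_{\mathcal A^\star})+(u_{\mathcal A^\star}-u^\dagger)+(u^\dagger-\widehat u_{K^\star})$, block stability for the middle term, LS stability plus the triangle inequality for the last term, and the boundary-stable estimate to return to $L^2$; your constant bookkeeping, including absorbing $C_{\rm ell}^{\rm bd}C_{\rm ls}\le C_{\rm ell}^{\rm bd}(1+C_{\rm ls})$ on the sampling and regularization terms, reproduces~\eqref{eq:approx-bound} exactly.
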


\begin{proof}[Proof sketch]
Represent $u_\emptyset$ exactly by the constant feature.
For nonempty $S$, choose $v_S$ from Assumption~\ref{ass:selected-anova} and set $v^\dagger=\sum_Sv_S$; the orderwise allocation gives simultaneous validity.
The selected ANOVA tail gives the two $\tau$ terms; the block estimate~\eqref{eq:assn-block-stability} converts componentwise $H^2$ errors to a residual/trace error for $u_{\mathcal A^\star}-v^\dagger$.
Assumption~\ref{ass:ls-stability} transfers the comparison error to the sampled LS solution, and the boundary-stable estimate~\eqref{eq:assn-coercive} converts the resulting residual/trace bound to $L^2$.
\end{proof}

The bound separates ANOVA truncation from feature, sampling, and regularization errors.
For the active-subspace step, set $\widetilde u:=\widehat u_{K^\star}$ and introduce the strengthened residual norm
\[
\|v\|_{\mathcal R_1}
:=
\|Lv\|_{L^2(\Omega)}
+\omega_{\rm th}\|\operatorname{Tr}v\|_{\mathcal Y_{\partial}}.
\]
Let $\mathcal X_1$ be a regularity space, with norm $\|\cdot\|_{\mathcal X_1}$, controlling the corresponding residual tail.
When the per-subset approximation input is available in this scale, denote its errors by $e_{S,M_k}^{\rm rf,(1)}$ and, for $n_k^{\rm blk}>0$, define
\[
\mathcal E_k^{(1)}(R_{\rm rf},M_k,\eta)^2
:=
\sum_{\substack{S\in\mathcal A^\star\\ |S|=k}}
e_{S,M_k}^{\rm rf,(1)}
\left(R_{\rm rf},\frac{\eta}{K^\star n_k^{\rm blk}}\right)^2,
\]
with the value zero when $n_k^{\rm blk}=0$.

\begin{proposition}[gradient control for the selected predictor]
\label{prop:predictor-gradient-control}
Assume the boundary-stability and sampled-LS estimates hold in $\mathcal R_1$, with
$\|\nabla e\|_{L^2(\Omega)}\le C_{\rm ell}^{(1)}\|e\|_{\mathcal R_1}$ for a constant $C_{\rm ell}^{(1)}>0$.
Let $C_\nabla>0$ absorb $C_{\rm ell}^{(1)}$ and the strengthened LS constants $C_{\rm samp}^{(1)}$ and $C_{\rm reg}^{(1)}$.
Assume also that
\[
\|u-u_{\mathcal A^\star}\|_{\mathcal R_1}\le \tau_1\|u\|_{\mathcal X_1},
\]
and that the per-subset estimates defining $\mathcal E_k^{(1)}$ hold.
Then, on the corresponding feature and sampled-LS stability events,
\[
\begin{aligned}
\|\nabla(u-\widetilde u)\|_{L^2(\Omega)}
&\le
C_\nabla\Bigl[
\tau_1\|u\|_{\mathcal X_1}
+\Gamma_{K^\star}^{1/2}
	\left(
	\sum_{k=1}^{K^\star}
	\mathcal E_k^{(1)}(R_{\rm rf},M_k,\eta_{\rm feat})^2
	\right)^{1/2}
\\
&\qquad
+C_{\rm samp}^{(1)}\chi_N
+C_{\rm reg}^{(1)}\lambda_{\rm reg}^{1/2}B_{\rm coef}
\Bigr].
\end{aligned}
\]
If the events use parameters $\eta_{\rm feat}$ and $\eta_{\rm LS}$, the bound holds with probability at least $1-\eta_{\rm feat}-\eta_{\rm LS}$.
\end{proposition}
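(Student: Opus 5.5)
The plan is to repeat the argument of Theorem~\ref{thm:approximation} with the residual/trace quantity $\|\cdot\|_{\mathcal R}$ replaced by $\|\cdot\|_{\mathcal R_1}$ throughout. The final conversion will use the gradient-stability estimate $\|\nabla e\|_{L^2(\Omega)}\le C_{\rm ell}^{(1)}\|e\|_{\mathcal R_1}$ in place of the $L^2$ bound~\eqref{eq:assn-coercive}.

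\textbf{Comparison function.} We represent $u_\emptyset$ exactly by the constant feature. For every nonempty $S\in\mathcal A^\star$ with $|S|=k$, we choose $v_S\in\mathcal V_{S,M_k}$ attaining, up to an arbitrarily small slack, the strengthened per-subset bound $e^{\rm rf,(1)}_{S,M_k}(R_{\rm rf},\eta_{\rm feat}/(K^\star n_k^{\rm blk}))$. By the orderwise allocation and a union bound, all these choices hold simultaneously with failure probability at most $\eta_{\rm feat}$. Set $u^\dagger:=\sum_{S}v_S\in\mathcal V_{\mathcal A^\star}$.

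\textbf{Error split.} We apply the gradient-stability estimate to $e=u-\widetilde u$; this requires $e$ to lie in the admissible class, which holds after the fixed boundary lift. The triangle inequality in $\mathcal R_1$ gives
\[
\|u-\widetilde u\|_{\mathcal R_1}\le\|u-u^\dagger\|_{\mathcal R_1}+\|\widetilde u-u^\dagger\|_{\mathcal R_1}.
\]
The strengthened sampled-LS input (Assumption~\ref{ass:ls-stability} in $\mathcal R_1$) bounds the second term by
\[
C_{\rm ls}^{(1)}\bigl[\|u^\dagger-u\|_{\mathcal R_1}+C_{\rm samp}^{(1)}\chi_N+C_{\rm reg}^{(1)}\lambda_{\rm reg}^{1/2}B_{\rm coef}\bigr],
\]
on an event of probability at least $1-\eta_{\rm LS}$. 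Hence everything reduces to $(1+C_{\rm ls}^{(1)})\|u-u^\dagger\|_{\mathcal R_1}$ plus the sampling and regularization terms.

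\textbf{Approximation term.} We split $u-u^\dagger=(u-u_{\mathcal A^\star})+\sum_{S}(u_S-v_S)$. The first piece is bounded by $\tau_1\|u\|_{\mathcal X_1}$ by hypothesis. For the second piece we invoke the block-stability estimate~\eqref{eq:assn-block-stability} in its $\mathcal R_1$ form, with the per-subset errors measured in the norm defining $e^{\rm rf,(1)}$. This gives $C_{\rm op}\Gamma_{K^\star}^{1/2}\bigl(\sum_k\mathcal E_k^{(1)}(R_{\rm rf},M_k,\eta_{\rm feat})^2\bigr)^{1/2}$ after regrouping the sum over $S$ by order $k$. Multiplying through by $C_{\rm ell}^{(1)}$ and absorbing $C_{\rm ell}^{(1)}$, $1+C_{\rm ls}^{(1)}$ and $C_{\rm op}$ into $C_\nabla$ yields the stated bound. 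A union bound over the feature and LS events gives probability at least $1-\eta_{\rm feat}-\eta_{\rm LS}$.

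\textbf{Main obstacle.} The delicate step is justifying the block-stability transfer in the strengthened norm. The statement only says the estimates hold ``in $\mathcal R_1$,'' so I would make explicit that~\eqref{eq:assn-block-stability} is assumed with $\mathcal R_1$ on the left and the $\mathcal X_1$-compatible component norm on the right, with the same factor $\Gamma_{K^\star}$. Any constant discrepancy is then absorbed into $C_\nabla$. A related subtlety is that $L^2$ control of $Lv$ for subset-supported features needs component approximation in $H^2(\Omega_S)$ at least. This requirement is exactly what the per-subset inputs $e^{\rm rf,(1)}$ encode, so it is assumed rather than proved.
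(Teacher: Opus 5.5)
Your proposal is correct and follows the paper's route. It is the proof of Theorem~\ref{thm:approximation} carried over to the $\mathcal R_1$ scale: exact constant, orderwise union bound for the per-subset comparisons, tail plus block estimate, and the strengthened LS transfer, with the final step using $\|\nabla e\|_{L^2(\Omega)}\le C_{\rm ell}^{(1)}\|e\|_{\mathcal R_1}$ in place of~\eqref{eq:assn-coercive}. You rightly flag that the block estimate must be assumed in $\mathcal R_1$ with the same factor $\Gamma_{K^\star}$, and that $B_{\rm coef}$ must bound the coefficients of this strengthened comparison function; the displayed bound tacitly relies on both.
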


Balancing the $L^2$ contributions against an effective tolerance yields the fitted width.
Normalize $\|u\|_{L^2(\Omega)}=1$, let $0<\varepsilon_{\rm tar}\le1$ be the prescribed accuracy, and set
\[
A_k^2:=\sum_{\substack{S\in\mathcal A^\star\\ |S|=k}}a_S^2,
\qquad
\varepsilon_{\rm eff}
:=
\varepsilon_{\rm tar}-\tau_{\rm sol}-C_{\rm ell}^{\rm bd}C_{\rm ls}\tau_{\rm res},
\qquad
\eta_{\rm app}:=\eta_{\rm feat}+\eta_{\rm LS}.
\]
Here and below, $\operatorname{polylog}$ denotes polynomial factors in the logarithms of its displayed arguments.

\begin{theorem}[trial-space width]
\label{thm:complexity}
Assume the hypotheses of Theorem~\ref{thm:approximation}, the component-class rate with random-feature smoothness $s_{\rm rf}>0$, and bounded $\Gamma_{K^\star}$, LS constants, Gram nondegeneracy/coherence, and $B_{\rm coef}$.
Suppose $\varepsilon_{\rm eff}>0$ and that the aggregate frequency-truncation, sampling, and regularization terms are each bounded by prescribed fixed fractions of $\varepsilon_{\rm eff}$.
Then there exist integer widths satisfying
\[
M_k\asymp
\max\left\{1,
\left(\frac{\Gamma_{K^\star}^{1/2}A_k}{\varepsilon_{\rm eff}}\right)^{2k/s_{\rm rf}}
\right\},
\]
up to logarithmic factors, for which $\|u-\widehat u_{K^\star}\|_{L^2(\Omega)}\le\varepsilon_{\rm tar}$ with probability at least $1-\eta_{\rm app}$.
Consequently,
\begin{align}
\label{eq:complexity-general}
N_{K^\star}
&\lesssim
d
\max\left\{1,\left(
\frac{\Gamma_{K^\star}^{1/2}A_1}{\varepsilon_{\rm eff}}
\right)^{2/s_{\rm rf}}\right\}
\operatorname{polylog}(1+d,1/\varepsilon_{\rm tar},1/\eta_{\rm feat},1/\eta_{\rm LS})
\nonumber\\
&\quad
+\sum_{k=2}^{K^\star}
n_k^{\rm blk}
\max\left\{1,\left(
\frac{\Gamma_{K^\star}^{1/2}A_k}{\varepsilon_{\rm eff}}
\right)^{2k/s_{\rm rf}}\right\}
\operatorname{polylog}(1+n_k^{\rm blk},1/\varepsilon_{\rm tar},1/\eta_{\rm feat},1/\eta_{\rm LS}).
\end{align}
\end{theorem}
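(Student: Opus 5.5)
The plan is to start from the bound~\eqref{eq:approx-bound} of Theorem~\ref{thm:approximation} and allocate the error budget. With $\|u\|_{L^2(\Omega)}=1$, the first line of~\eqref{eq:approx-bound} equals $\varepsilon_{\rm tar}-\varepsilon_{\rm eff}$. It therefore suffices to make the bracketed term, multiplied by $C_{\rm ell}^{\rm bd}(1+C_{\rm ls})$, at most $\varepsilon_{\rm eff}$. I would fix fractions $\theta_{\rm rf},\theta_{\rm tr},\theta_{\rm samp},\theta_{\rm reg}>0$ with sum at most $1/(C_{\rm ell}^{\rm bd}(1+C_{\rm ls}))$; this is possible because the LS and stability constants are bounded. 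By hypothesis, the truncation, sampling and regularization terms are already at most $\theta_{\rm tr}\varepsilon_{\rm eff}$, $\theta_{\rm samp}\varepsilon_{\rm eff}$ and $\theta_{\rm reg}\varepsilon_{\rm eff}$. The work then reduces to making the finite-width part $C_{\rm op}\Gamma_{K^\star}^{1/2}(\sum_k \mathcal E_k^2)^{1/2}$ at most $\theta_{\rm rf}\varepsilon_{\rm eff}$. The probability $1-\eta_{\rm feat}-\eta_{\rm LS}=1-\eta_{\rm app}$ is inherited directly from Theorem~\ref{thm:approximation}.

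For the finite-width part I would insert the component-class rate into the definition of $\mathcal E_k$. Having split off the $E_{\rm trunc}$ piece, the rate part of each per-subset error at failure level $\eta_{\rm feat}/(K^\star n_k^{\rm blk})$ is $a_S M_k^{-s_{\rm rf}/(2k)}\ell_k(\eta_{\rm feat}/(K^\star n_k^{\rm blk}))$. Squaring and summing over $|S|=k$ gives
\[
\mathcal E_k\lesssim A_k M_k^{-s_{\rm rf}/(2k)}\ell_k\bigl(\eta_{\rm feat}/(K^\star n_k^{\rm blk})\bigr).
\]
The orders are balanced by requiring each $C_{\rm op}\Gamma_{K^\star}^{1/2}\mathcal E_k\le \theta_{\rm rf}\varepsilon_{\rm eff}/\sqrt{K^\star}$, so the square root of the sum is at most $\theta_{\rm rf}\varepsilon_{\rm eff}$. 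Solving this for $M_k$ gives
\[
M_k\ge \Bigl(\sqrt{K^\star}C_{\rm op}\Gamma_{K^\star}^{1/2}A_k\ell_k/(\theta_{\rm rf}\varepsilon_{\rm eff})\Bigr)^{2k/s_{\rm rf}},
\]
and I take $M_k$ to be the ceiling of the maximum of this quantity and $1$. This is the stated $M_k\asymp\max\{1,(\Gamma_{K^\star}^{1/2}A_k/\varepsilon_{\rm eff})^{2k/s_{\rm rf}}\}$ up to logarithmic factors. The factor $\ell_k^{2k/s_{\rm rf}}$ is polylogarithmic in $K^\star n_k^{\rm blk}/\eta_{\rm feat}$, and $K^\star\le d$. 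The factor $(\sqrt{K^\star})^{2k/s_{\rm rf}}$ is a constant at fixed $k$ and $s_{\rm rf}$, and I would absorb it; alternatively, weighting the budget by orders avoids it entirely.

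The width bound~\eqref{eq:complexity-general} then follows from $N_{K^\star}=1+\sum_k n_k^{\rm blk}M_k$. For $k=1$ we use $n_1^{\rm blk}=d$, and the constant feature is absorbed. The logarithms coming from $\eta_{\rm LS}$ and $\varepsilon_{\rm tar}$ enter through the $\chi_N$ and frequency-radius conditions that keep the other three terms within their fractions; these can be absorbed into $\operatorname{polylog}$.

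The main obstacle is consistency of the hypotheses in the width parameters. The constants $C_{\rm samp}$, $B_{\rm coef}$ and the frequency radius $R_{\rm rf}$ in $E_{\rm trunc}$ may depend on $M_k$. The theorem assumes them bounded (for $B_{\rm coef}$ and the LS constants) and assumes the remaining terms are controlled by fixed fractions, so the argument is not circular. I would make this explicit in two steps. First, fix $R_{\rm rf}$ so that the truncation term is small. Then choose $M_k$ as above, and finally pick $N_{\rm int}$, $N_{\rm bc}$ and $\lambda_{\rm reg}$ after $M_k$ so that the sampling and regularization terms fit their fractions. The bounded-constant hypothesis guarantees this last choice does not feed back into $M_k$.
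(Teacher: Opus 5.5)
Your proposal is correct and follows the paper's argument. Like the paper, you balance the finite-width term of Theorem~\ref{thm:approximation} orderwise against a fixed fraction of $\varepsilon_{\rm eff}$, solve for $M_k$ with the floor $M_k\ge1$, and sum $n_k^{\rm blk}M_k$ plus the constant feature; you simply make the budget split, the separation of the truncation part, and the order of parameter choices explicit.

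One point needs care. The factors $(K^\star)^{k/s_{\rm rf}}$ and $\log K^\star$ are absorbed into the hidden constants only because $K^\star$ is treated as fixed. Summable order weights remove the $K^\star$-dependence but leave a $k$-dependent constant, so they do not avoid the factor ``entirely''.
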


\begin{proof}[Proof sketch]
Balance the approximation term in Theorem~\ref{thm:approximation} to the target tolerance at each order and impose the integer floor $M_k\ge1$.
Adding the constant feature and summing $n_k^{\rm blk}M_k$ over selected nonempty blocks gives~\eqref{eq:complexity-general}.
The bound $n_k^{\rm blk}\le O(d^k)$ gives the dense width; bounded aggregate amplitudes remove the extra dimension factor in screened or energy-concentrated regimes.
\end{proof}

For the uniform dense family with fixed $K^\star\ge1$, $n_k^{\rm blk}\le\binom dk$ and $a_S\le a_{\max}$ reduce the theorem to
\[
N_{K^\star}
=
O\!\left(
d^{K^\star(1+K^\star/s_{\rm rf})}
\varepsilon_{\rm eff}^{-2K^\star/s_{\rm rf}}
\operatorname{polylog}(1+d,1/\varepsilon_{\rm tar},1/\eta_{\rm feat},1/\eta_{\rm LS})
\right).
\]
Now suppose that $K^\star$ is fixed.
If $n_k^{\rm blk}$, $A_k$, and the stability constants are dimension-independent for $k\ge2$, then so is the higher-order augmentation, apart from the singleton contribution $dM_1$.

The theorem controls fitted width, not screening work: a full order-$k$ screen uses $O(M_{\rm QMC}\binom{d}{k})$ residual evaluations, up to shared base evaluations.
Candidate restrictions or caps reduce this cost but require separate recovery assumptions.

\subsection{Residual Sobol screening}
\label{subsec:identification}

Residual screening selects the higher-order family required by Theorem~\ref{thm:complexity} when an ideal level-$k$ correction signal has a sufficient closed-index gap and the fitted residual is a small perturbation of that signal.

Fix an order $k$, set $\mathcal C_k:=\{S\subseteq\mathcal I_d:\ |S|=k\}$, and, for $q\in L^2_\mu(\Omega)$ with positive variance, write
$T_S:=T_S^{\rm closed}(q)$ and $T_{\max}:=\max_{S\in\mathcal C_k}T_S$.
Let $\widehat T_S$ be the empirical estimate.
A target family $\mathcal A_{k,{\rm closed}}^{\rm gap}\subseteq\mathcal C_k$ is separated at threshold $\vartheta_{\rm Sob}\in(0,1)$ with margin $m_{\rm Sob}>0$ if
\[
\begin{aligned}
T_S-\vartheta_{\rm Sob}T_{\max}&\ge m_{\rm Sob}
&& (S\in\mathcal A_{k,{\rm closed}}^{\rm gap}),\\
T_S-\vartheta_{\rm Sob}T_{\max}&\le-m_{\rm Sob}
&& (S\notin\mathcal A_{k,{\rm closed}}^{\rm gap}).
\end{aligned}
\]

\begin{proposition}[relative closed-Sobol screening]
\label{prop:identification}
If the target family satisfies the displayed separation and
$\max_{S\in\mathcal C_k}|\widehat T_S-T_S|\le e_k<m_{\rm Sob}/(1+\vartheta_{\rm Sob})$, then
\[
\widehat{\mathcal A}_k=\{S\in\mathcal C_k:\widehat T_S>\vartheta_{\rm Sob}\max_{S'\in\mathcal C_k}\widehat T_{S'}\},
\]
identifies $\mathcal A_{k,{\rm closed}}^{\rm gap}$ exactly.
For bounded Saltelli estimators based on $M_{\rm pf}$ independent pick-freeze rows, this event follows under the scaling
\[
M_{\rm pf}\gtrsim
\frac{G_\infty^4(1+\vartheta_{\rm Sob})^2}{v_{\min}^2m_{\rm Sob}^2}
\log\frac{2|\mathcal C_k|}{\eta},
\qquad
\|q\|_\infty\le G_\infty,\quad \operatorname{Var}(q)\ge v_{\min}>0 .
\]
\end{proposition}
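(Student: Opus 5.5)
The proof has two parts: a deterministic exact-recovery argument under uniform estimation error, and a concentration argument that produces that uniform error bound with the stated number of pick-freeze rows.

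\textbf{Deterministic part.} Write $\widehat T_{\max}:=\max_{S'\in\mathcal C_k}\widehat T_{S'}$. Assume $|\widehat T_S-T_S|\le e_k$ for all $S\in\mathcal C_k$. Taking the maximum over $S$ on both sides of $\widehat T_S\le T_S+e_k$ and $T_S\le\widehat T_S+e_k$ gives $|\widehat T_{\max}-T_{\max}|\le e_k$. Hence, for every $S$,
\[
\bigl|(\widehat T_S-\vartheta_{\rm Sob}\widehat T_{\max})-(T_S-\vartheta_{\rm Sob}T_{\max})\bigr|\le(1+\vartheta_{\rm Sob})e_k<m_{\rm Sob}.
\]
If $S$ is in the target family, the population margin is at least $m_{\rm Sob}$, so the empirical margin is strictly positive and $S\in\widehat{\mathcal A}_k$. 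If $S$ is not in the target family, the empirical margin is strictly negative, so $S$ is excluded. The strict inequality $e_k<m_{\rm Sob}/(1+\vartheta_{\rm Sob})$ is what removes any tie issue at the threshold.

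The method's truncation of scores at zero does not affect this. The map $t\mapsto\max(t,0)$ is $1$-Lipschitz and $T_S\ge0$, so truncated estimates still satisfy the same uniform bound. The degenerate all-zero convention does not arise: the margins force $T_{\max}>0$, and $\widehat T_{\max}\ge T_{\max}-e_k>0$.

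\textbf{Probabilistic part.} Each closed index is a ratio $T_S=N_S/D$ with $N_S=\operatorname{Var}(\E[q\,|\,X_S])$ and $D=\operatorname{Var}(q)$. For the pick-freeze pair $(X,X^{S})$ sharing $X_S$, one has $N_S=\E[q(X)q(X^{S})]-(\E q)^2$. The Saltelli estimators of $N_S$ and $D$ are sample means of bounded products, possibly centered with a second independent mean estimate, with summands bounded by $O(G_\infty^2)$. Hoeffding's inequality then gives, for each $S$, $|\widehat N_S-N_S|\le\delta$ and $|\widehat D-D|\le\delta$ with probability at least $1-2\exp(-cM_{\rm pf}\delta^2/G_\infty^4)$.

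A union bound over the $|\mathcal C_k|$ numerators and the shared denominator yields the factor $\log(2|\mathcal C_k|/\eta)$.

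\textbf{Main obstacle: the ratio perturbation.} Use $N_S\le D$ and require $\delta\le v_{\min}/2$, which keeps $\widehat D\ge v_{\min}/2$. Then
\[
|\widehat N_S/\widehat D-N_S/D|\le\frac{|\widehat N_S-N_S|}{\widehat D}+\frac{N_S|\widehat D-D|}{D\widehat D}\le\frac{4\delta}{v_{\min}}.
\]
Choose $\delta\asymp v_{\min}m_{\rm Sob}/(1+\vartheta_{\rm Sob})$, with a small absolute constant, so that $e_k<m_{\rm Sob}/(1+\vartheta_{\rm Sob})$. Substituting into the Hoeffding exponent gives
\[
M_{\rm pf}\gtrsim\frac{G_\infty^4(1+\vartheta_{\rm Sob})^2}{v_{\min}^2m_{\rm Sob}^2}\log\frac{2|\mathcal C_k|}{\eta}.
\]
The condition $\delta\le v_{\min}/2$ is then automatic, because $m_{\rm Sob}\le1$ and $v_{\min}\le G_\infty^2$.

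Two points need care. First, the denominator must be shared across all $S$, or else bounded uniformly; it enters the union bound only once. Second, the "independent rows" hypothesis is what licenses Hoeffding's inequality; the randomized QMC implementation is not covered by this statement.
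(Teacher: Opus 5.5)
Your proposal is correct and follows the same route as the paper. On the uniform error event, each empirical margin $\widehat T_S-\vartheta_{\rm Sob}\widehat T_{\max}$ is within $(1+\vartheta_{\rm Sob})e_k<m_{\rm Sob}$ of its population value, so target subsets are selected and all others rejected; the sample-size condition then comes from independent pick-freeze concentration, union-bounded over $\mathcal C_k$. Your ratio step $|\widehat N_S/\widehat D-N_S/D|\le 4\delta/v_{\min}$, which uses $N_S\le D$ and $\widehat D\ge v_{\min}/2$, is the detail that yields the $G_\infty^4/(v_{\min}^2m_{\rm Sob}^2)$ scaling left implicit in the paper's sketch.
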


The computations instead use $M_{\rm QMC}$ randomized QMC points to estimate the same indices; clipping the nonnegative estimates preserves the uniform error event.

\begin{proof}[Proof sketch]
On the uniform concentration event, active indices are selected, while inactive
indices lie below the threshold by the displayed margin.
The sampling condition is the independent pick-freeze event union-bounded over $\mathcal C_k$.
\end{proof}

To connect this ideal criterion to residual screening, for each $k=2,\ldots,K^\star$ let $q_k^{\rm ideal}$ be the level-$k$ correction signal and $\mathcal A_{k,{\rm closed}}^{\rm gap}$ its nonempty separated family.
Set $n_{k,{\rm gap}}^{\rm blk}:=|\mathcal A_{k,{\rm closed}}^{\rm gap}|$ and
\[
\mathcal A_{\rm ideal}^\star
:=\{\emptyset\}\cup\{\{i\}:i\in\mathcal I_d\}
\cup\bigcup_{k=2}^{K^\star}\mathcal A_{k,{\rm closed}}^{\rm gap}.
\]
Let $\mathfrak L_{k,{\rm gap}}$ be the polylog factor in~\eqref{eq:complexity-general} evaluated at $n_{k,{\rm gap}}^{\rm blk}$.
The nonempty-level condition ensures that Algorithm~\ref{alg:harfm} reaches every order through $K^\star$.
For the fitted residual $\mathfrak r_{k-1}:=f-L\widehat u_{k-1}$, suppose uniformly over $k=2,\ldots,K^\star$ that
\[
\|\mathfrak r_{k-1}-q_k^{\rm ideal}\|_{L^2_\mu(\Omega)}
\le
\min\left\{\frac{v_{\min}}{16G},
\frac{v_{\min}m_{\rm Sob}}{2C_{\rm sig}G(1+\vartheta_{\rm Sob})}\right\},
\]
where $\|\mathfrak r_{k-1}\|_{L^2_\mu(\Omega)},\|q_k^{\rm ideal}\|_{L^2_\mu(\Omega)}\le G$, $\operatorname{Var}(q_k^{\rm ideal})\ge v_{\min}>0$, and $C_{\rm sig}>0$ is the absolute constant in the closed-Sobol perturbation bound.
\begin{corollary}[screened width under residual perturbation]
\label{cor:screening-complexity}
Assume the inputs of Theorems~\ref{thm:approximation}--\ref{thm:complexity} with $\mathcal A^\star=\mathcal A_{\rm ideal}^\star$.
The levelwise closed-index families coincide with the intended order-$k$ ANOVA families when the required lower-order removal conditions hold.
If the preceding signal and residual conditions hold at every screened order and the independent pick--freeze concentration events have margin $m_{\rm Sob}/2$ and total failure budget $\eta_{\rm scr}$, then screening recovers this family and, with probability at least $1-\eta_{\rm scr}-\eta_{\rm feat}-\eta_{\rm LS}$,
\[
N_{K^\star}
\lesssim
dM_1+
\sum_{k=2}^{K^\star}
n_{k,{\rm gap}}^{\rm blk}
\max\left\{1,\left(
\frac{\Gamma_{K^\star}^{1/2}A_k}{\varepsilon_{\rm eff}}
\right)^{2k/s_{\rm rf}}\right\}
\mathfrak L_{k,{\rm gap}}.
\]
\end{corollary}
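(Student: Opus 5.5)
The argument is an induction over the screened orders $k=2,\dots,K^\star$. The selection at each order comes from Proposition~\ref{prop:identification}, applied with margin $m_{\rm Sob}/2$ to the fitted residual $\mathfrak r_{k-1}$ in place of $q_k^{\rm ideal}$. Once the family is known to be $\mathcal A^\star_{\rm ideal}$, the width bound follows from Theorem~\ref{thm:complexity} with $n_k^{\rm blk}=n_{k,{\rm gap}}^{\rm blk}$.

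\textbf{Step 1: a closed-Sobol perturbation bound.} For $q,q'$ with $\|q\|,\|q'\|\le G$ and $\operatorname{Var}(q)\ge v_{\min}$, I will show
\[
|T_S(q')-T_S(q)|\le C_{\rm sig}\,G\,\|q'-q\|_{L^2_\mu(\Omega)}/v_{\min}.
\]
Conditional expectation is an $L^2_\mu$ contraction, so the numerators $\operatorname{Var}(\E[\cdot\,|\,X_S])$ differ by at most $2G\|q'-q\|$. The denominators differ by the same amount. The first branch of the residual hypothesis, $\|q'-q\|\le v_{\min}/(16G)$, gives $\operatorname{Var}(q')\ge v_{\min}/2$ (more precisely $\ge 7v_{\min}/8$). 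The quotient rule then yields the bound with an absolute constant $C_{\rm sig}$.

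\textbf{Step 2: the separation survives.} The second branch of the hypothesis gives $|T_S(\mathfrak r_{k-1})-T_S(q_k^{\rm ideal})|\le m_{\rm Sob}/(2(1+\vartheta_{\rm Sob}))$ uniformly in $S$. Hence $T_{\max}$ moves by at most the same amount. Each gap $T_S-\vartheta_{\rm Sob}T_{\max}$ therefore changes by at most $(1+\vartheta_{\rm Sob})$ times this, that is, by at most $m_{\rm Sob}/2$. So $\mathcal A^{\rm gap}_{k,{\rm closed}}$ is separated for $\mathfrak r_{k-1}$ with margin $m_{\rm Sob}/2$.

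\textbf{Step 3: estimation and failure budget.} By assumption, the pick--freeze concentration event at order $k$ bounds the estimation error by $e_k<(m_{\rm Sob}/2)/(1+\vartheta_{\rm Sob})$. Proposition~\ref{prop:identification} then gives $\widehat{\mathcal A}_k=\mathcal A^{\rm gap}_{k,{\rm closed}}$. I split $\eta_{\rm scr}$ across orders and take a union bound.

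\textbf{Step 4: the induction and the width bound.} Each target family is nonempty, so no level is empty and Algorithm~\ref{alg:harfm} proceeds through order $K^\star$. The stopping rule is assumed not to trigger earlier; I would state this as part of reaching every order, since the nonempty-level condition is what the text uses for it. By induction, the terminal family is $\mathcal A^\star_{\rm ideal}$. The stated lower-order removal conditions are what identify the closed-index families with the intended ANOVA families, and they are only needed for that interpretation.

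On the intersection of the screening, feature and LS events, which has probability at least $1-\eta_{\rm scr}-\eta_{\rm feat}-\eta_{\rm LS}$, I apply Theorem~\ref{thm:complexity} with $\mathcal A^\star=\mathcal A^\star_{\rm ideal}$ and $n_k^{\rm blk}=n_{k,{\rm gap}}^{\rm blk}$. The singleton term is written as $dM_1$, with $M_1$ as in that theorem, and the polylog factors become $\mathfrak L_{k,{\rm gap}}$.

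\textbf{Main obstacle.} The subtle point is conditioning. The residuals $\mathfrak r_{k-1}$ depend on the random features and collocation samples, while the screening concentration must hold for these data-dependent signals. I would handle this by taking the QMC and pick--freeze samples independent of the fitting randomness. The residual perturbation hypothesis is assumed deterministically on the fitting events. The concentration bound can then be applied conditionally, with $G$ and $v_{\min}/2$ as uniform parameters, before integrating out the fitting randomness.
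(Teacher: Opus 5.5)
Your proposal is correct and follows essentially the route the corollary's hypotheses are built for. The chain is: the $L^2_\mu$ closed-Sobol perturbation bound via the projection argument, with the first branch keeping $\operatorname{Var}(\mathfrak r_{k-1})\ge 7v_{\min}/8$; the second branch halving the separation margin; Proposition~\ref{prop:identification} at margin $m_{\rm Sob}/2$; an induction with a union bound over orders; and finally Theorem~\ref{thm:complexity} on $\mathcal A^\star_{\rm ideal}$ with $n_k^{\rm blk}=n_{k,{\rm gap}}^{\rm blk}$.

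Two small touch-ups are worth making. First, the lower-order-removal clause deserves its one-line justification: if $q_k^{\rm ideal}$ has no ANOVA components of order below $k$, then $T_S^{\rm closed}=\sum_{\emptyset\ne T\subseteq S}\mathsf S_T=\mathsf S_S$ for $|S|=k$. Second, in your conditioning remark the Saltelli concentration needs a sup-norm bound $G_\infty$ on the residuals, not the $L^2_\mu$ bound $G$. This is harmless here only because the corollary assumes the concentration events directly.
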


Hence the screening step replaces the full order-$k$ family by the recovered family of size $n_{k,{\rm gap}}^{\rm blk}$ without changing the selected-space accuracy conclusion.

\subsection{Active-subspace bounds}
\label{subsec:as-analysis}

For active-subspace identification, write $\mathbf C_u:=\mathbf C[u]=\E[\nabla u(\nabla u)^\top]$ and $\widetilde{\mathbf C}:=\mathbf C[\widetilde u]$, with expectation taken with respect to the product reference measure, and abbreviate $\|\cdot\|_{L^2_\mu(\Omega)}$ by $\|\cdot\|_{L^2_\mu}$ in this subsection.
If $\widetilde u$ is the selected predictor, then
\begin{equation}
\label{eq:predictor-covariance}
\|\mathbf C_u-\widetilde{\mathbf C}\|_{\rm op}
\le
\left(\|\nabla u\|_{L^2_\mu}+\|\nabla\widetilde u\|_{L^2_\mu}\right)
\|\nabla(u-\widetilde u)\|_{L^2_\mu}.
\end{equation}
On the analytic domain $(0,1)^d$, the reference measure is normalized Lebesgue measure, so $L^2(\Omega)=L^2_\mu(\Omega)$; hence Proposition~\ref{prop:predictor-gradient-control} supplies the predictor term in~\eqref{eq:predictor-covariance}.

Assume $\mathbf C_u$ has eigengap $\Delta:=\nu_{r_{\rm AS}}-\nu_{r_{\rm AS}+1}>0$.
Let $\widehat{\mathbf C}$ be the empirical covariance formed from $\widetilde u$ using samples independent of the least-squares and screening samples, conditional on the fitted predictor, and suppose
\[
\|\widehat{\mathbf C}-\widetilde{\mathbf C}\|_{\rm op}
+\|\widetilde{\mathbf C}-\mathbf C_u\|_{\rm op}\le \Delta/2 .
\]

\begin{corollary}[subspace identification with predictor error]
\label{cor:as-predictor}
Under the preceding eigengap and covariance-perturbation conditions, for $W$ and $\widehat W_{r_{\rm AS}}$ spanning the exact and empirical leading eigenspaces,
\[
\|\sin\Theta(\widehat W_{r_{\rm AS}},W)\|_{\rm op}
\le
\frac{2}{\Delta}
\left[
\|\widehat{\mathbf C}-\widetilde{\mathbf C}\|_{\rm op}
+\left(\|\nabla u\|_{L^2_\mu}+\|\nabla\widetilde u\|_{L^2_\mu}\right)
\|\nabla(u-\widetilde u)\|_{L^2_\mu}
\right].
\]
If $\|\nabla\widetilde u(x)\|_2\le G_\nabla$ almost surely, assume also that $N_{\rm AS}\gtrsim\log(d/\eta)$.
The first term is then $O(G_\nabla^2\sqrt{\log(d/\eta)/N_{\rm AS}})$ with probability at least $1-\eta$.
\end{corollary}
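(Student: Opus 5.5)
The argument splits into a deterministic perturbation step and a sampling step. For the deterministic step I will apply the Davis--Kahan bound recalled in Section~\ref{subsec:bg-as}, with $\mathbf C_u$ as the reference matrix and $\widehat{\mathbf C}$ as the perturbed one, in the gap form that requires a perturbation of size at most $\Delta/2$. By Weyl's inequality, the hypothesis $\|\widehat{\mathbf C}-\mathbf C_u\|_{\rm op}\le\|\widehat{\mathbf C}-\widetilde{\mathbf C}\|_{\rm op}+\|\widetilde{\mathbf C}-\mathbf C_u\|_{\rm op}\le\Delta/2$ keeps the top $r_{\rm AS}$ eigenvalues of $\widehat{\mathbf C}$ separated from the rest of the spectrum of $\mathbf C_u$ by at least $\Delta/2$. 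The one-sided Davis--Kahan $\sin\Theta$ theorem then gives
\[
\|\sin\Theta(\widehat W_{r_{\rm AS}},W)\|_{\rm op}\le\frac{\|\widehat{\mathbf C}-\mathbf C_u\|_{\rm op}}{\Delta/2}.
\]
This is the source of the constant $2/\Delta$; I will use this form rather than the generic $C_{\rm DK}$. Next I split $\widehat{\mathbf C}-\mathbf C_u$ by the triangle inequality through $\widetilde{\mathbf C}$ and bound the predictor term with \eqref{eq:predictor-covariance}.

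The proof of \eqref{eq:predictor-covariance} is a short identity, and I will include it. Write $\nabla u\nabla u^\top-\nabla\widetilde u\nabla\widetilde u^\top=\nabla u(\nabla u-\nabla\widetilde u)^\top+(\nabla u-\nabla\widetilde u)\nabla\widetilde u^\top$. For unit vectors $a,b$, Cauchy--Schwarz in $L^2_\mu$ bounds $\E[(a\cdot\nabla u)(b\cdot\nabla e)]$ by $\|\nabla u\|_{L^2_\mu}\|\nabla e\|_{L^2_\mu}$, and the second term is handled the same way. This gives the first displayed inequality.

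For the sampling statement, I condition on the fitted predictor $\widetilde u$. This is legitimate because the $x_n^{\rm AS}$ are independent of the collocation and screening samples. Conditionally, the rank-one summands $\bm p_n\bm p_n^\top$ are i.i.d. with mean $\widetilde{\mathbf C}$, each has operator norm at most $G_\nabla^2$, and $\|\widetilde{\mathbf C}\|_{\rm op}\le G_\nabla^2$. The centered summands are therefore bounded by $2G_\nabla^2$, and their variance proxy $\|\E[(\bm p\bm p^\top)^2]\|_{\rm op}\le G_\nabla^2\|\widetilde{\mathbf C}\|_{\rm op}\le G_\nabla^4$. Matrix Bernstein then gives, with probability at least $1-\eta$,
\[
\|\widehat{\mathbf C}-\widetilde{\mathbf C}\|_{\rm op}\lesssim G_\nabla^2\Bigl(\sqrt{\log(2d/\eta)/N_{\rm AS}}+\log(2d/\eta)/N_{\rm AS}\Bigr).
\]
The assumption $N_{\rm AS}\gtrsim\log(d/\eta)$ makes the linear term no larger than the square-root term, which yields the stated $O(G_\nabla^2\sqrt{\log(d/\eta)/N_{\rm AS}})$. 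Since the conditional bound does not depend on $\widetilde u$, it also holds unconditionally.

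The main obstacle is to state the Davis--Kahan variant precisely enough to obtain the factor $2/\Delta$ without an extra constant. I will first try the Davis--Kahan $\sin\Theta$ theorem with the spectral separation $\delta=\nu_{r_{\rm AS}}-\widehat\nu_{r_{\rm AS}+1}$. Weyl gives $\widehat\nu_{r_{\rm AS}+1}\le\nu_{r_{\rm AS}+1}+\Delta/2$, hence $\delta\ge\Delta/2$. If that variant fails to apply cleanly, I will instead use the Yu--Wang--Samworth form, which changes only the absolute constant.
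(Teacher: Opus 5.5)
Your proposal is correct and takes essentially the same route as the paper. Weyl's inequality turns the $\Delta/2$ perturbation hypothesis into a separation of at least $\Delta/2$ for the one-sided Davis--Kahan $\sin\Theta$ theorem, and the predictor term is split off through $\widetilde{\mathbf C}$ using the bilinear Cauchy--Schwarz bound \eqref{eq:predictor-covariance}; the empirical term then follows from matrix Bernstein conditional on $\widetilde u$, with $N_{\rm AS}\gtrsim\log(d/\eta)$ absorbing the linear term. One small precision: the conditioning needs the $x_n^{\rm AS}$ to be independent of everything that determines $\widetilde u$, including the random feature draws, not only of the collocation and screening samples.
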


Consequently, the two spectral tests in Section~\ref{subsec:method-active-subspace} are stable whenever their energy and eigengap-ratio inequalities hold with margins exceeding the induced eigenvalue perturbations.

The augmented estimate uses a fixed coordinate-plus-ridge comparison; its coordinate term is not the exact selected ANOVA projection $u_{\mathcal A^\star}$.
For second-order strong-form residuals on bounded domains, the ridge perturbation follows from smooth coefficients and bounded derivatives of $h$ through order three.

Let $W\in\R^{d\times r_{\rm AS}}$ be the exact leading basis from Corollary~\ref{cor:as-predictor}, and let the coordinate components $u_{S,{\rm coord}}$ depend only on $x_S$ for $S\in\mathcal A^\star$.
Set $u_{\rm coord}:=\sum_{S\in\mathcal A^\star}u_{S,{\rm coord}}$ and consider the fixed comparison decomposition
\[
u(x)=u_{\rm coord}(x)+h(W^\top x)+u_\perp(x),
\qquad
\|u_\perp\|_{L^2(\Omega)}+C_{\rm ell}^{\rm bd}\|u_\perp\|_{\mathcal R}
\le\tau_{\rm mix}\|u\|_{L^2(\Omega)}.
\]
Here $\tau_{\rm mix}\ge0$ is the unresolved mixed-structure tolerance; neither orthogonality nor uniqueness is required.
For each nonempty coordinate component, let $e_{S,M_k}^{\rm rf,c}(R_{\rm rf},\eta)$ satisfy~\eqref{eq:assn-rf} with $u_S$ replaced by $u_{S,{\rm coord}}$, and, for $n_k^{\rm blk}>0$, define
\[
\mathcal E_k^{\rm c}(R_{\rm rf},M_k,\eta)^2
:=\sum_{\substack{S\in\mathcal A^\star\\|S|=k}}
e_{S,M_k}^{\rm rf,c}
\left(R_{\rm rf},\frac{\eta}{K^\star n_k^{\rm blk}}\right)^2,
\]
with the value zero when $n_k^{\rm blk}=0$.

Let $\widehat W_{r_{\rm AS}}$ be the empirical active-subspace basis and set
\[
\theta_{\rm AS}:=\|\sin\Theta(\widehat W_{r_{\rm AS}},W)\|_{\rm op}.
\]
Choose an orthogonal alignment $Q\in\R^{r_{\rm AS}\times r_{\rm AS}}$ such that
$\|W-\widehat W_{r_{\rm AS}}Q^\top\|_{\rm op}\le\sqrt{2}\,\theta_{\rm AS}$, and set $h_Q(z):=h(Qz)$.
Assume
\[
\|h(W^\top x)-h_Q(\widehat W_{r_{\rm AS}}^\top x)\|_{\mathcal R}
\le C_{\rm ridge}L_hX_\Omega\theta_{\rm AS},
\]
where $L_h$ bounds the derivatives entering $\mathcal R$ and $X_\Omega=(\E\|X\|_2^2)^{1/2}$.
Let $\mathcal V_{\rm AS}$ be the span of $M_{\rm glob}$ global random features in the estimated coordinates and define
\[
\mathcal E_{\rm AS}
:=a_{\rm AS}\{E_{\rm trunc}^{\rm AS}(R_{\rm rf};h)
+M_{\rm glob}^{-s_{\rm rf}/(2r_{\rm AS})}\ell_{\rm AS}(\eta_{\rm AS,feat})\}.
\]
Here $a_{\rm AS}>0$ is the ridge-class amplitude, $E_{\rm trunc}^{\rm AS}$ is the active-coordinate frequency-window bias, and $\ell_{\rm AS}$ is the logarithmic feature-event factor.
Assume, with probability at least $1-\eta_{\rm AS,feat}$, that
\[
\inf_{v_{\rm AS}\in\mathcal V_{\rm AS}}
\|h_Q(\widehat W_{r_{\rm AS}}^\top x)-v_{\rm AS}\|_{\mathcal R}
\le \mathcal E_{\rm AS}.
\]
Take $v_\emptyset=u_{\emptyset,{\rm coord}}$ exactly and, on the coordinate-feature event, choose $v_S\in\mathcal V_{S,M_k}$ for nonempty $S$; set $z_{S,{\rm coord}}:=u_{S,{\rm coord}}-v_S$.
Assume these comparison errors satisfy the augmented block estimate with nonorthogonality factor $\Gamma_{\rm aug}\ge1$,
\[
\left\|\sum_{S\in\mathcal A^\star}z_{S,{\rm coord}}\right\|_{\mathcal R}
\le C_{\rm op}\Gamma_{\rm aug}^{1/2}
\left(\sum_{S\in\mathcal A^\star}\|z_{S,{\rm coord}}\|_{H^2(\Omega_S)}^2\right)^{1/2}.
\]
For $\mathcal V_{\mathcal A^\star}+\mathcal V_{\rm AS}$, let $B_{\rm coef}^{\rm aug}$ and $\chi_N^{\rm aug}$ denote the comparison-coefficient bound and sampled-LS fluctuation, respectively, and set $C_{\rm mix}:=\max\{1,C_{\rm ls}\}$.

\begin{theorem}[augmented active-subspace error]
\label{thm:as-augmented-error}
Under the preceding decomposition, rotation, feature-approximation, and augmented block-stability assumptions, suppose the joint solve on $\mathcal V_{\mathcal A^\star}+\mathcal V_{\rm AS}$ satisfies Assumption~\ref{ass:ls-stability}.
Let the subspace event in Corollary~\ref{cor:as-predictor} have failure budget $\eta=\eta_{\rm AS}$.
Then, on the feature, least-squares, and subspace-identification events, with probability at least $1-\eta_{\rm feat}-\eta_{\rm AS,feat}-\eta_{\rm LS}-\eta_{\rm AS}$,
\begin{align}
\label{eq:as-augmented-error}
\|u-\widehat u_{\rm HA}\|_{L^2(\Omega)}
&\le
C_{\rm mix}\tau_{\rm mix}\|u\|_{L^2(\Omega)}
\nonumber\\
&\quad
+C_{\rm ell}^{\rm bd}(1+C_{\rm ls})
\Bigl[
C_{\rm op}\Gamma_{\rm aug}^{1/2}
	\left(
	\sum_{k=1}^{K^\star}
	\mathcal E_k^{\rm c}(R_{\rm rf},M_k,\eta_{\rm feat})^2
	\right)^{1/2}
\nonumber\\
&\qquad
+C_{\rm ridge}L_hX_\Omega\theta_{\rm AS}
+\mathcal E_{\rm AS}
\nonumber\\
&\qquad
+C_{\rm samp}\chi_N^{\rm aug}
+C_{\rm reg}\lambda_{\rm reg}^{1/2}B_{\rm coef}^{\rm aug}
\Bigr],
\end{align}
where $\theta_{\rm AS}$ is bounded by Corollary~\ref{cor:as-predictor}.
\end{theorem}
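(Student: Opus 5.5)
The proof will mirror that of Theorem~\ref{thm:approximation}, with the coordinate-plus-ridge decomposition in place of the ANOVA projection. The plan is to build an explicit comparison function $u^\dagger\in\mathcal V_{\mathcal A^\star}+\mathcal V_{\rm AS}$ and bound $\|u-u^\dagger\|_{\mathcal R}$. Then Assumption~\ref{ass:ls-stability} transfers this bound to the joint solve $\widehat u_{\rm HA}$, and the boundary-stable estimate~\eqref{eq:assn-coercive} converts the residual/trace bound to $L^2$. We work on the intersection of the coordinate-feature event (budget $\eta_{\rm feat}$, split orderwise as in $\mathcal E_k^{\rm c}$), the active-feature event ($\eta_{\rm AS,feat}$), the subspace event of Corollary~\ref{cor:as-predictor} ($\eta_{\rm AS}$), and the LS event ($\eta_{\rm LS}$). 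A union bound gives the stated probability. Conditioning matters only in that $\widehat W_{r_{\rm AS}}$, and hence $\mathcal V_{\rm AS}$, is fixed before the collocation samples are drawn. Because the AS samples are independent, the LS hypothesis applies conditionally.

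\textbf{Construction.} Set $u^\dagger:=v_{\rm coord}+v_{\rm AS}$ with $v_{\rm coord}=\sum_{S\in\mathcal A^\star}v_S$, where $v_\emptyset=u_{\emptyset,{\rm coord}}$. Take $v_{\rm AS}\in\mathcal V_{\rm AS}$ to be a (near-)minimizer for $h_Q(\widehat W_{r_{\rm AS}}^\top x)$. Then
\[
u-u^\dagger=\sum_{S}z_{S,{\rm coord}}+\bigl[h(W^\top x)-h_Q(\widehat W_{r_{\rm AS}}^\top x)\bigr]+\bigl[h_Q(\widehat W_{r_{\rm AS}}^\top x)-v_{\rm AS}\bigr]+u_\perp .
\]
By the triangle inequality in $\|\cdot\|_{\mathcal R}$, we bound the four pieces as follows. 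The augmented block estimate together with~\eqref{eq:assn-rf} for the coordinate components gives $C_{\rm op}\Gamma_{\rm aug}^{1/2}(\sum_k(\mathcal E_k^{\rm c})^2)^{1/2}$. The rotation assumption gives $C_{\rm ridge}L_hX_\Omega\theta_{\rm AS}$. The active feature assumption gives $\mathcal E_{\rm AS}$. The last piece is $\|u_\perp\|_{\mathcal R}$.

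\textbf{Transfer.} Write $u-\widehat u_{\rm HA}=(u-u^\dagger)+(u^\dagger-\widehat u_{\rm HA})$. Apply~\eqref{eq:assn-coercive} to the whole error; this requires the error to lie in the admissible trace class, which holds after the boundary lift. This gives $\|u-\widehat u_{\rm HA}\|_{L^2}\le C_{\rm ell}^{\rm bd}(\|u-u^\dagger\|_{\mathcal R}+\|u^\dagger-\widehat u_{\rm HA}\|_{\mathcal R})$. The second term is at most $C_{\rm ls}[\|u^\dagger-u\|_{\mathcal R}+C_{\rm samp}\chi_N^{\rm aug}+C_{\rm reg}\lambda_{\rm reg}^{1/2}B_{\rm coef}^{\rm aug}]$, which yields the common prefactor $C_{\rm ell}^{\rm bd}(1+C_{\rm ls})$.

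\textbf{Main obstacle.} The delicate step is the $u_\perp$ term, since the statement charges it only $C_{\rm mix}\tau_{\rm mix}\|u\|_{L^2}$ rather than $C_{\rm ell}^{\rm bd}(1+C_{\rm ls})\|u_\perp\|_{\mathcal R}$. To handle this, I would first split off $u_\perp$ \emph{before} applying coercivity: $\|u-\widehat u_{\rm HA}\|_{L^2}\le\|u_\perp\|_{L^2}+\|(u-u_\perp)-\widehat u_{\rm HA}\|_{L^2}$. I would then apply~\eqref{eq:assn-coercive} only to the second piece and use the LS estimate with the pieces $u_\perp$ still present in $\|u^\dagger-u\|_{\mathcal R}$. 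This isolates $\|u_\perp\|_{L^2}+C_{\rm ell}^{\rm bd}(1+C_{\rm ls})\|u_\perp\|_{\mathcal R}$, which is still a factor $(1+C_{\rm ls})$ rather than $C_{\rm mix}$. To reach $C_{\rm mix}=\max\{1,C_{\rm ls}\}$, I expect one must instead measure the LS stability against the target $u-u_\perp$: if the LS estimate is invoked with data consistent with $u$ but its comparison written relative to $u$, the $u_\perp$ contribution appears once through $C_{\rm ls}$. The coercivity step then applies to $(u-u_\perp)-\widehat u_{\rm HA}$, whose residual differs from the LS-controlled quantity by $u_\perp$ itself, absorbed by the $C_{\rm ell}^{\rm bd}\|u_\perp\|_{\mathcal R}$ part of $\tau_{\rm mix}$. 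Combined with $\|u_\perp\|_{L^2}$, this gives at most $\max\{1,C_{\rm ls}\}\tau_{\rm mix}\|u\|_{L^2}$. I would verify this bookkeeping carefully, since it is the only place where the constant differs from the pattern of Theorem~\ref{thm:approximation}. Finally, $\theta_{\rm AS}$ is left as is and bounded by Corollary~\ref{cor:as-predictor} on its event.
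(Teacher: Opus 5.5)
Your overall route is the paper's: build $u^\dagger=v_{\rm coord}+v_{\rm AS}$, add and subtract $h_Q(\widehat W_{r_{\rm AS}}^\top x)$, control the coordinate part by the augmented block estimate, and finish with the augmented LS stability and the boundary-stable estimate~\eqref{eq:assn-coercive}. The gap is in the one step you flag, and it comes from a miscount rather than a missing idea.

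Your Transfer paragraph applies coercivity to the whole error. That charges $u_\perp$ the amount $C_{\rm ell}^{\rm bd}(1+C_{\rm ls})\|u_\perp\|_{\mathcal R}\le(1+C_{\rm ls})\tau_{\rm mix}\|u\|_{L^2(\Omega)}$, which is not the stated $C_{\rm mix}\tau_{\rm mix}$; it is only within a factor of two of it. Your split-off route is the right one, but you evaluate it incorrectly. Start from $\|u-\widehat u_{\rm HA}\|_{L^2}\le\|u_\perp\|_{L^2}+C_{\rm ell}^{\rm bd}\|(u-u_\perp)-\widehat u_{\rm HA}\|_{\mathcal R}$. The triangle inequality through $u^\dagger$ then produces $\|(u-u_\perp)-u^\dagger\|_{\mathcal R}+\|u^\dagger-\widehat u_{\rm HA}\|_{\mathcal R}$. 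Here $(u-u_\perp)-u^\dagger=\sum_S z_{S,{\rm coord}}+[h(W^\top x)-h_Q(\widehat W_{r_{\rm AS}}^\top x)]+[h_Q(\widehat W_{r_{\rm AS}}^\top x)-v_{\rm AS}]$ contains no $u_\perp$. So $(u-u_\perp)-\widehat u_{\rm HA}$ differs from the LS-controlled quantity $u^\dagger-\widehat u_{\rm HA}$ by this $u_\perp$-free comparison error, not by $u_\perp$ as you write. Consequently $u_\perp$ enters exactly once, through $C_{\rm ls}\|u^\dagger-u\|_{\mathcal R}\le C_{\rm ls}\bigl(\|(u-u_\perp)-u^\dagger\|_{\mathcal R}+\|u_\perp\|_{\mathcal R}\bigr)$.

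Collecting terms, the $u_\perp$ contribution is
$\|u_\perp\|_{L^2}+C_{\rm ls}C_{\rm ell}^{\rm bd}\|u_\perp\|_{\mathcal R}\le\max\{1,C_{\rm ls}\}\bigl(\|u_\perp\|_{L^2}+C_{\rm ell}^{\rm bd}\|u_\perp\|_{\mathcal R}\bigr)\le C_{\rm mix}\tau_{\rm mix}\|u\|_{L^2(\Omega)}$.
This is why $\tau_{\rm mix}$ weights the two norms as it does. It reproduces the pattern $\tau_{\rm sol}+C_{\rm ell}^{\rm bd}C_{\rm ls}\tau_{\rm res}$ of Theorem~\ref{thm:approximation}, with $u-u_\perp$ in the role of $u_{\mathcal A^\star}$. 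The three comparison pieces then carry $C_{\rm ell}^{\rm bd}(1+C_{\rm ls})$, and the sampling and regularization terms carry $C_{\rm ell}^{\rm bd}C_{\rm ls}\le C_{\rm ell}^{\rm bd}(1+C_{\rm ls})$.

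Your proposed repair charges the residual of $(u-u_\perp)-\widehat u_{\rm HA}$ an extra $C_{\rm ell}^{\rm bd}\|u_\perp\|_{\mathcal R}$ on top of the $C_{\rm ls}$ copy. That sums to $(1+C_{\rm ls})\tau_{\rm mix}$ again, so $\max\{1,C_{\rm ls}\}$ does not follow from it.

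A minor point concerns conditioning. If the augmented re-solve reuses the collocation samples, then $\widehat W_{r_{\rm AS}}$ depends on them through $\widetilde u$ and is not fixed in advance. You do not need that claim anyway: LS stability on $\mathcal V_{\mathcal A^\star}+\mathcal V_{\rm AS}$ is a hypothesis, and the union bound over the four events needs no independence.
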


If $\mathcal A^\star$ is produced by residual screening rather than fixed in advance, intersecting with the event in Corollary~\ref{cor:screening-complexity} adds $\eta_{\rm scr}$ to the displayed failure budget.

\begin{proof}[Proof sketch]
Add and subtract $h_Q(\widehat W_{r_{\rm AS}}^\top x)$ and combine its subspace-rotation and low-dimensional random-feature errors.
Control $u_{\rm coord}$ by the per-subset construction and block estimate, then apply the augmented LS stability and boundary-stable elliptic estimate.
\end{proof}

\subsection{Picard iteration with HA-RFM inner solves}
\label{subsec:picard-analysis}

For semilinear equations, the preceding linear-solve estimates become inner errors in the fixed-point wrapper defined by the chosen successive linearization.

Let $\mathcal X$ be a normed solution space, with norm $\|\cdot\|_{\mathcal X}$, controlling the nonlinear terms, including gradients when needed.
Assume the exact update map $\mathcal T:\mathcal X\to\mathcal X$ has a fixed point $u^\star$ and is a contraction with modulus $0<L_{\mathcal T}<1$ on a closed invariant ball $\mathbb B_{\rm Pic}\subset\mathcal X$ containing the iterates.
Let $u^{\ell+1}:=\widehat{\mathcal T}_\ell(u^\ell)$ satisfy
\[
\|\widehat{\mathcal T}_\ell(u^\ell)-\mathcal T(u^\ell)\|_{\mathcal X}
\le\delta_{{\rm Pic},\ell}.
\]

\begin{theorem}[Picard iteration with inexact HA-RFM solves]
\label{thm:picard-inexact}
Under the preceding contraction and inexact-solve conditions, for $n\ge1$ the iterates satisfy
\[
\|u^n-u^\star\|_{\mathcal X}
\le
L_{\mathcal T}^n\|u^0-u^\star\|_{\mathcal X}
+\sum_{\ell=0}^{n-1}L_{\mathcal T}^{n-1-\ell}\delta_{{\rm Pic},\ell} .
\]
If $\delta_{{\rm Pic},\ell}\le\delta_{\rm Pic}$ for $0\le\ell<n$, the accumulated inner-solve error is bounded by $(1-L_{\mathcal T}^n)\delta_{\rm Pic}/(1-L_{\mathcal T})$.
\end{theorem}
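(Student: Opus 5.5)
The plan is the standard inexact fixed-point recursion. First, set $e_\ell:=\|u^\ell-u^\star\|_{\mathcal X}$ and derive a one-step inequality. Using $u^\star=\mathcal T(u^\star)$ and the triangle inequality,
\[
e_{\ell+1}
=\|\widehat{\mathcal T}_\ell(u^\ell)-\mathcal T(u^\star)\|_{\mathcal X}
\le\|\widehat{\mathcal T}_\ell(u^\ell)-\mathcal T(u^\ell)\|_{\mathcal X}
+\|\mathcal T(u^\ell)-\mathcal T(u^\star)\|_{\mathcal X}
\le\delta_{{\rm Pic},\ell}+L_{\mathcal T}e_\ell .
\]
The contraction bound is legitimate because both $u^\ell$ and $u^\star$ lie in the invariant ball $\mathbb B_{\rm Pic}$. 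This follows from the standing hypothesis that the ball contains the iterates. The fixed point lies in $\mathbb B_{\rm Pic}$ because a contraction's fixed point in a closed invariant set is unique there; alternatively, we simply read the hypothesis as including $u^\star$.

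Second, I would unroll this recursion by induction on $n$. The case $n=1$ is exactly the one-step inequality with $\ell=0$. Assume the bound holds for $n$. Multiplying it by $L_{\mathcal T}$ and adding $\delta_{{\rm Pic},n}$ gives
\[
e_{n+1}\le L_{\mathcal T}^{n+1}e_0+\sum_{\ell=0}^{n-1}L_{\mathcal T}^{n-\ell}\delta_{{\rm Pic},\ell}+\delta_{{\rm Pic},n}.
\]
This is the claimed formula with $n+1$ in place of $n$, since the last term is the $\ell=n$ summand with weight $L_{\mathcal T}^0$.

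Third, for the uniform case $\delta_{{\rm Pic},\ell}\le\delta_{\rm Pic}$, the accumulated term is bounded by $\delta_{\rm Pic}\sum_{j=0}^{n-1}L_{\mathcal T}^j$. This geometric sum equals $(1-L_{\mathcal T}^n)/(1-L_{\mathcal T})$ because $0<L_{\mathcal T}<1$.

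The only point needing care is the membership of the iterates and of $u^\star$ in $\mathbb B_{\rm Pic}$, which justifies applying the contraction modulus. Since invariance is assumed in the statement, I expect no genuine obstacle here. The rest is elementary algebra.
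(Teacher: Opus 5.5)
Your argument is correct and is the same as the paper's proof: subtract $u^\star=\mathcal T(u^\star)$ to get the one-step bound $e_{\ell+1}\le\delta_{{\rm Pic},\ell}+L_{\mathcal T}e_\ell$, unroll the scalar recurrence, and sum the geometric series. One small caveat: uniqueness of a fixed point inside $\mathbb B_{\rm Pic}$ does not by itself place the given $u^\star$ in the ball, so it is your fallback reading (the hypothesis includes $u^\star\in\mathbb B_{\rm Pic}$) that justifies the contraction step, and the paper relies on the same assumption implicitly.
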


\begin{proof}[Proof sketch]
Subtract $u^\star=\mathcal T(u^\star)$ and use
\[
\|u^{\ell+1}-u^\star\|_{\mathcal X}
\le
\delta_{{\rm Pic},\ell}+L_{\mathcal T}\|u^\ell-u^\star\|_{\mathcal X}.
\]
Iterating the scalar recurrence gives the result.
\end{proof}

The same recurrence covers variable linearized operators with a uniform contraction constant and invariant ball.
For the fixed-point map $\mathcal T(v)=L_0^{-1}(f-\mathcal N(v))$, it suffices that $L_0^{-1}:\mathcal X^\ast\to\mathcal X$ have norm $C_{\rm inv}$, where $\mathcal X^\ast$ is the dual space, and that $\mathcal N$ be $L_{\mathcal N}$-Lipschitz on the ball with $C_{\rm inv}L_{\mathcal N}<1$.
The preceding linear-solve results supply $\delta_{{\rm Pic},\ell}$ when their bounds hold in $\mathcal X$ and their constants are uniform on $\mathbb B_{\rm Pic}$.

\section{Numerical experiments}
\label{sec:experiments}

The experiments first test selected-space error under ANOVA truncation and predictor-gradient identification of oblique coordinates, then assess equal-width RFM comparisons and the Picard extension, and finally examine stability, residual-Sobol recovery, and densely coupled regimes requiring broader coordinate spaces.

Unless stated otherwise, we use $\sigma=\tanh$, the unnormalized-system values $\lambda_{\rm impl}=10^{-8}$ and $\omega_{\rm bc,impl}=100$, frequencies sampled entry-wise from $[-R_{\rm rf},R_{\rm rf}]^k$, and biases sampled from $[-\pi,\pi]$; $R_{\rm rf}=\pi/2$ on $(-1,1)^d$ and $R_{\rm rf}=\pi$ on $(0,1)^d$.
All selected coefficients are fitted jointly by the least-squares formulation using QR with column pivoting.
Interior sample counts follow the selected width, with boundary counts ranging from about one fifth to two fifths of the interior counts; the largest runs use at most $1.8\times10^4$ interior and $3000$ boundary points.
Unless stated otherwise, covariance estimates use $N_{\rm AS}=4096$ samples.
Errors are relative $L^2$ errors on $20\,000$ independent Monte Carlo test points, except the closed-form ANOVA tests in Table~\ref{tab:exp-verification}, which use $10\,000$ points.
CPU times are wall-clock measurements on an Apple M3 Pro MacBook Pro with 12 cores and 18 GB unified memory.

Full-dimensional RFM is matched to the corresponding HA-RFM width.
The PINN baseline is a four-hidden-layer, width-128 tanh network trained by Adam for $20{,}000$ iterations with learning rate $10^{-3}$, $2048$ interior points per iteration, boundary weight $100$, and $n_{\partial}= \max\{8,\lfloor 64/d\rfloor\}$ points per boundary face.
We report ANOVA tails, alignment, covariance-signal comparisons, and eigengaps; source-covariance runs use problem-derived signals, whereas predictor-gradient runs use $\nabla\widetilde u$.
The studies prescribe coordinate order and, where indicated, a rank-one augmentation to isolate the two allocation mechanisms.
Under the threshold $\gamma_{\rm gap}=10$ in Algorithm~\ref{alg:harfm}, the measured eigengaps cleanly separate the oblique and dense regimes.

\subsection{ANOVA truncation and selected-space error}
\label{subsec:exp-verification}

Two closed-form solutions with known ANOVA decompositions probe the structural term in Theorem~\ref{thm:approximation}.
The first is a product target on $(0,1)^d$:
$-\Delta u + \kappa(x)u=f$, $\kappa(x)=\sum_i \cos(\pi x_i)$, homogeneous boundary data, and
$u_{\rm exact}(x)=\prod_i \sin(\pi x_i)$.
Its order-$k$ components have variance $\mathrm{Var}(v)^k(2/\pi)^{2(d-k)}$, where $v(t)=\sin(\pi t)-2/\pi$; together with their multiplicity $\binom{d}{k}$ and $\mathrm{Var}(v)/(2/\pi)^2\approx0.234$, this gives the truncation tail.

The second target has exact superposition dimension two:
\[
u_{\rm exact}(x)
= \sum_i \sin(\pi x_i)
+ \tfrac{1}{2}\sum_{i<j}\sin(\pi x_i)\sin(\pi x_j)
\]
on $(0, 1)^d$ with $\kappa \equiv 1$ and matching non-homogeneous Dirichlet datum; no order-$\ge 3$ ANOVA component exists by construction.
Table~\ref{tab:exp-verification} reflects Theorem~\ref{thm:approximation}.
For the product target, increasing $K$ from one to two reduces the error at $d=5$ but remains truncation-limited at $d=10$, where the unresolved higher-order tail is large.
For the exact order-two target, $K=2$ removes the structural tail, leaving feature, sampling, and regularization errors.
Here $M_{\rm top}$ denotes the number of random features assigned to each retained top-order subset, and $N_K$ denotes the total number of fitted coefficients in the reported space.

\begin{table}[!htbp]
\centering
\caption{ANOVA truncation and selected-space error in the setting of Theorem~\ref{thm:approximation}. Left: product target with nonzero ANOVA mass at every order. Right: exact order-two target with no higher-order ANOVA component.}
\label{tab:exp-verification}
\scriptsize
\setlength{\tabcolsep}{4pt}
\begin{tabular}[c]{ccrrrr}
\toprule
\multicolumn{6}{c}{\textbf{Product target}} \\
\cmidrule(lr){1-6}
\textbf{$d$} & \textbf{$K$} & \textbf{$M_{\rm top}$} & \textbf{$N_K$} & \textbf{rel.\ $L^2$} & \textbf{CPU (s)} \\
\midrule
5 & 1 & 200 & 1\,000 & $5.5{\times}10^{-1}$ & 1.3 \\
5 & 2 & 100 & 2\,000 & $2.7{\times}10^{-1}$ & 9.4 \\
\midrule
10 & 1 & 150 & 1\,500 & $7.9{\times}10^{-1}$ & 2.5 \\
10 & 2 & 60  & 4\,200 & $7.6{\times}10^{-1}$ & 65 \\
\bottomrule
\end{tabular}
\qquad
\begin{tabular}[c]{ccrrr}
\toprule
\multicolumn{5}{c}{\textbf{Exact order-two target ($K=2$)}} \\
\cmidrule(lr){1-5}
\textbf{$d$} & \textbf{$M_1,M_2$} & \textbf{$N_K$} & \textbf{rel.\ $L^2$} & \textbf{CPU (s)} \\
\midrule
5  & $100,80$ & 1\,300 & $7.1{\times}10^{-6}$ & 5.2 \\
10 & $40,40$  & 2\,200 & $1.6{\times}10^{-4}$ & 14 \\
10 & $80,80$  & 4\,400 & $6.2{\times}10^{-6}$ & 75 \\
10 & $100,80$ & 4\,600 & $6.2{\times}10^{-6}$ & 86 \\
\bottomrule
\end{tabular}
\end{table}

\subsection{Oblique low-rank structure}
\label{subsec:exp-oblique}

The next solution depends on a single oblique coordinate.
Consider the Poisson problem $-\Delta u=f$ on $(-1,1)^d$ with manufactured solution
\begin{equation}
\label{eq:oblique-poisson-u}
u_{\rm exact}(x) \;=\; s^2 + \sin s, \quad s := \frac{1}{d} \sum_{i=1}^{d} x_i,
\end{equation}
source $f(x)=(1/d)(\sin s-2)$, and Dirichlet datum $g\equiv u_{\rm exact}$.
The solution has effective dimension one, with active direction $(1,\dots,1)/\sqrt d$ oblique to the coordinate axes.
Its coordinate ANOVA expansion nevertheless has nonzero components at every order, directly testing the active-coordinate augmentation.

\paragraph{Covariance step}
Gradients of the initial ANOVA fit define the predictor covariance, and Table~\ref{tab:predictor-as} reports the absolute inner product between the estimated and true directions.
The label ``all'' retains all pairs and triples, ``residual'' uses the displayed Sobol-screened counts, and ``all pairs'' uses every pair but no triples.
The $d=30$ and $d=50$ cases use smaller per-pair widths to separate the four-decimal-place predictor alignment from higher-order screening; $N_K$ includes the active block.

\begin{table}[!htbp]
\centering
\caption{Predictor-gradient active-subspace alignment on the oblique Poisson problem. The direction is computed from $\nabla\widetilde u$; alignment exceeds $0.9999$ through $d=50$.}
\label{tab:predictor-as}
\scriptsize
\setlength{\tabcolsep}{3pt}
\begin{tabular}{ccccccc}
\toprule
\textbf{$d$} & \textbf{screening} & \textbf{selected pairs} & \textbf{selected triples} & \textbf{$N_K$} & \textbf{rel.\ $L^2$} & \textbf{alignment} \\
\midrule
10 & all      & 45  & 120 & 3\,780 & $4.84\times10^{-7}$ & $>0.9999$ \\
10 & residual & 3   & 120 & 2\,520 & $1.34\times10^{-6}$ & $>0.9999$ \\
15 & all      & 105 & 455 & 8\,865 & $8.20\times10^{-7}$ & $>0.9999$ \\
15 & residual & 99  & 400 & 8\,055 & $4.80\times10^{-6}$ & $>0.9999$ \\
20 & residual & 48  & 400 & 5\,990 & $4.10\times10^{-4}$ & $>0.9999$ \\
25 & residual & 300 & 300 & 7\,930 & $2.87\times10^{-5}$ & $>0.9999$ \\
30 & all pairs & 435 & 0 & 5\,430 & $3.49\times10^{-5}$ & $>0.9999$ \\
50 & all pairs & 1\,225 & 0 & 7\,405 & $4.54\times10^{-4}$ & $>0.9999$ \\
\bottomrule
\end{tabular}
\end{table}

\paragraph{Random oblique direction}
To remove the symmetry of~\eqref{eq:oblique-poisson-u}, we repeat the test with a dense random unit vector $w\in\mathbb S^{d-1}$ obtained from normalized i.i.d.\ $\mathcal N(0,1)$ entries and held fixed across methods; $\max_i|w_i|$ measures its coordinate concentration.
We set
\[
u_{\rm exact}(x)=(w^\top x)^2+\sin(w^\top x),\qquad -\Delta u=\sin(w^\top x)-2 .
\]
Using $\nabla\widetilde u$, Table~\ref{tab:random-oblique} gives alignment at least $0.999$ through $d=50$.
Adding only $40$ active-coordinate features to coordinate spaces of widths $4800$, $5550$, and $6400$ increases width by $0.83\%$, $0.72\%$, and $0.63\%$.
It reduces error by factors of $39$, $25$, and $14$ over the coordinate precursor and $100$, $65$, and $34$ over equal-width full RFM.

\begin{table}[!htbp]
\centering
\caption{Near-width ablation for a random oblique ridge direction. Covariance is computed from $\nabla\widetilde u$. Unknowns match predictor-AS HA-RFM and full RFM; the $K=2$ precursor has only $40$ fewer, so the active block adds less than $1\%$.}
\label{tab:random-oblique}
\scriptsize
\setlength{\tabcolsep}{4pt}
\begin{tabular}{ccccccc}
\toprule
\textbf{$d$} & \textbf{unknowns} & \textbf{$\max_i |w_i|$} & \textbf{HA-RFM $K=2$} & \textbf{HA-RFM + pred.\ AS} & \textbf{full RFM} & \textbf{alignment} \\
\midrule
20 & 4\,840 & 0.582 & $1.14{\times}10^{-1}$ & $2.95{\times}10^{-3}$ & $2.94{\times}10^{-1}$ & 0.9999 \\
30 & 5\,590 & 0.436 & $1.56{\times}10^{-1}$ & $6.17{\times}10^{-3}$ & $3.99{\times}10^{-1}$ & 0.9998 \\
50 & 6\,440 & 0.433 & $2.76{\times}10^{-1}$ & $1.94{\times}10^{-2}$ & $6.54{\times}10^{-1}$ & 0.9991 \\
\bottomrule
\end{tabular}
\end{table}

\paragraph{Covariance signal and ANOVA space}
Table~\ref{tab:as-signal-ablation} separates covariance choice from the effect of adding the ANOVA space: exact and source directions provide reference subspaces, while predictor rows use $\nabla\widetilde u$.
With the predictor direction, ANOVA-plus-global reaches $5.72\times10^{-5}$ versus $2.71\times10^{-3}$ for the $30$-feature global-only reference; the exact- and source-direction global-only references each reach $8.73\times10^{-8}$.
Because the compared spaces have $30$ and $5430$ unknowns, this table provides complementary signal/space diagnostics; Table~\ref{tab:random-oblique} supplies the near-width ablation.

\begin{table}[!htbp]
\centering
\caption{Covariance signal and approximation space on the $d=30$ oblique Poisson problem. Global-only AS-RFM uses $30$ features; HA-RFM adds the $K=2$ ANOVA space. Predictor-row CPU includes covariance formation, while unknowns count the final fitted space.}
\label{tab:as-signal-ablation}
\scriptsize
\setlength{\tabcolsep}{4pt}
\begin{tabular}{llrrrr}
\toprule
\textbf{method} & \textbf{covariance/space} & \textbf{unknowns} & \textbf{rel.\ $L^2$} & \textbf{alignment} & \textbf{CPU (s)} \\
\midrule
AS-RFM & exact dir. / global only & 30 & $8.73{\times}10^{-8}$ & $1.000$ & $<0.1$ \\
AS-RFM & source cov. / global only & 30 & $8.73{\times}10^{-8}$ & $1.000$ & $<0.1$ \\
AS-RFM & predictor / global only & 30 & $2.71{\times}10^{-3}$ & $>0.9999$ & 75.9 \\
HA-RFM & source cov. / ANOVA+global & 5\,430 & $1.49{\times}10^{-7}$ & $1.000$ & 121 \\
HA-RFM & predictor / ANOVA+global & 5\,430 & $5.72{\times}10^{-5}$ & $>0.9999$ & 151 \\
\bottomrule
\end{tabular}
\end{table}

\paragraph{Baseline comparisons}
\label{subsec:exp-baselines}
Figure~\ref{fig:accuracy-efficiency} compares the oblique Poisson target with equal-width full RFM and a fixed-configuration PINN.
Its source-derived HA-RFM direction isolates trial-space allocation; Tables~\ref{tab:predictor-as} and~\ref{tab:random-oblique} test identification from $\nabla\widetilde u$.

\begin{figure}[!htbp]
\centering
\includegraphics[width=0.96\textwidth]{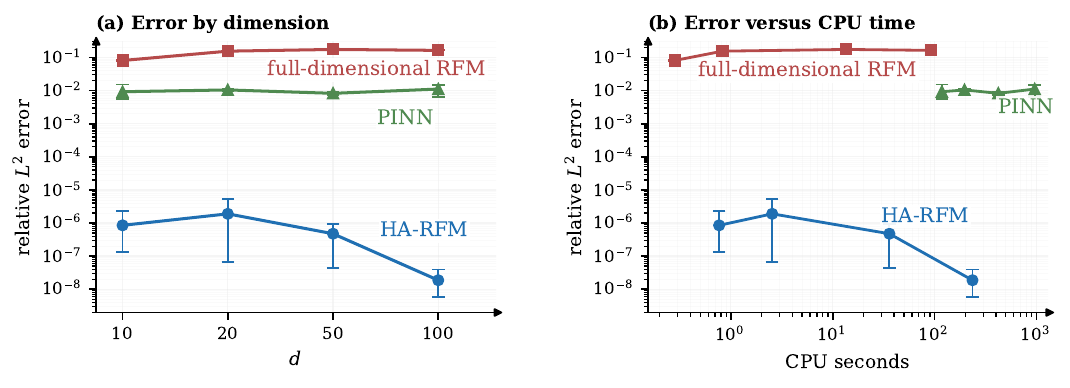}
\caption{Accuracy and cost on the oblique Poisson problem. HA-RFM uses a source-covariance direction for the equal-width comparison; Tables~\ref{tab:predictor-as} and~\ref{tab:random-oblique} report predictor-gradient alignment. RFM markers show three-draw means and min--max ranges.}
\label{fig:accuracy-efficiency}
\end{figure}

\paragraph{Fokker--Planck comparison}
\label{subsec:exp-fp}
On $\Omega = (-1,1)^d$ we solve
\[
L \rho := -\Delta \rho - \nabla \rho \cdot \nabla U(x) - \rho\, \Delta U(x) = f,
\qquad
\rho|_{\partial\Omega}=g,
\]
where $\kappa_{\rm FP}:=0.05$ is the Fokker--Planck coupling parameter,
$U(x) = \tfrac{1}{2} \sum_i x_i^2 + \kappa_{\rm FP} \sum_{i<j} x_i x_j$, and
$\rho_{\rm exact}(x) = 1 + 0.5 \cos(s)$ with $s = d^{-1}\sum_i x_i$.
Both the solution and manufactured source share the rank-one coordinate $s$, which is identified from the source covariance formed with $\nabla f$.
Indeed, applying the operator to $\rho_{\rm exact}$ produces a source depending on the same average coordinate.
The final solve uses the $K=1$ ANOVA space and $M_{\rm glob}=30$ global features with $R_{\rm glob}=5/\sqrt d$.
Once this coordinate is identified, the global features avoid the $\binom{100}{2}=4{,}950$ pair blocks of a coordinate-aligned $K=2$ space at $d=100$.
The method reaches relative $L^2$ error at most $5 \times 10^{-8}$ at every $d$ and $3.67 \times 10^{-10}$ at $d = 100$ in 89 s (Table~\ref{tab:exp2}).

\begin{table}[H]
\centering
\caption{Weakly coupled Fokker--Planck test with the active-subspace covariance formed from $\nabla f$; the source and exact solution share the same rank-one average coordinate.}
\label{tab:exp2}
\small
\setlength{\tabcolsep}{5pt}
\begin{tabular}{ccccccc}
\toprule
\textbf{$d$} & \textbf{$K$} & \textbf{$M_1$} & \textbf{$M_{\rm glob}$} & \textbf{signal} & \textbf{rel. $L^2$} & \textbf{CPU (s)} \\
\midrule
10  & 1 & 80 & 30 & source cov. & $2.40 \times 10^{-8}$  & 1.5 \\
20  & 1 & 80 & 30 & source cov. & $2.52 \times 10^{-8}$  & 3.9 \\
50  & 1 & 80 & 30 & source cov. & $1.32 \times 10^{-10}$ & 41 \\
100 & 1 & 50 & 30 & source cov. & $3.67 \times 10^{-10}$ & 89 \\
\bottomrule
\end{tabular}
\end{table}

\subsection{Semilinear equations and dense Riccati coupling}
\label{subsec:exp-picard}

HA-RFM solves each successive linearization; Theorem~\ref{thm:picard-inexact} propagates the inner error for contractive exact updates.
The isotropic HJB, Allen--Cahn, and dense Riccati runs use at most six, eight, and four steps.
They stop when the root-mean-square iterate difference divided by the new-iterate root-mean-square magnitude falls below $10^{-10}$ on a fixed $20\,000$-point Monte Carlo set independent of collocation and screening.

Set the drift, state-cost, and discount parameters to
$\alpha_{\rm drift}=\beta_{\rm HJB}=\gamma_{\rm HJB}=1$ and let $A=-\alpha_{\rm drift}I$.
The first family is the discounted stationary LQR Hamilton--Jacobi--Bellman equation
\[
\gamma_{\rm HJB} V - (Ax)\cdot\nabla V + \tfrac14|\nabla V|^2 - \tfrac12\Delta V
= \beta_{\rm HJB} |x|^2
\quad\text{on }(-1,1)^d.
\]
It has the exact quadratic solution
$V_{\rm exact}(x)=c\sum_i x_i^2+cd/\gamma_{\rm HJB}$, where
$c^2+(\gamma_{\rm HJB}+2\alpha_{\rm drift})c-\beta_{\rm HJB}=0$ and $c\approx0.30278$.
At Picard step $\ell+1$ we replace
$|\nabla V^{\ell+1}|^2$ by $-|\nabla V^\ell|^2+2\nabla V^\ell\cdot\nabla V^{\ell+1}$, giving a linear advection-reaction-diffusion equation for $V^{\ell+1}$.

The second family is the Allen--Cahn equation
$-\Delta u+u^3-u=f$ on $(-1,1)^d$, with
$u_{\rm exact}(x)=\tanh(s)$ and $s=d^{-1}\sum_i x_i$.
The Picard update
$(u^{\ell+1})^3\approx -2(u^\ell)^3+3(u^\ell)^2u^{\ell+1}$
again leaves a linear PDE for the next iterate, now with reaction $3(u^\ell)^2-1$ and source $f+2(u^\ell)^3$.
LQR is coordinate-additive, whereas Allen--Cahn uses source-covariance features along its rank-one oblique coordinate with $R_{\rm glob}=5/\sqrt d$ and reaches $4.39\times10^{-9}$ relative $L^2$ error at $d=100$ (Table~\ref{tab:exp-picard}).

\begin{table}[!htbp]
\centering
\caption{Semilinear elliptic problems solved by Picard iterations with HA-RFM inner solves. HJB preserves coordinate additivity; Allen--Cahn uses a source-covariance rank-one oblique coordinate.}
\label{tab:exp-picard}
\small
\setlength{\tabcolsep}{5pt}
\begin{tabular}{cccccccc}
\toprule
\multicolumn{4}{c}{\textbf{LQR HJB}} & \multicolumn{4}{c}{\textbf{Allen--Cahn}} \\
\cmidrule(lr){1-4}\cmidrule(lr){5-8}
\textbf{$d$} & \textbf{$M_1$} & \textbf{rel. $L^2$} & \textbf{CPU (s)} & \textbf{$d$} & \textbf{$M_1, M_{\rm glob}$} & \textbf{rel. $L^2$} & \textbf{CPU (s)} \\
\midrule
4  & 60 & $2.55 \times 10^{-9}$ & 4.5  & 10  & 60, 30 & $3.57 \times 10^{-8}$ & 22 \\
10 & 60 & $2.36 \times 10^{-9}$ & 6.2  & 20  & 60, 30 & $4.51 \times 10^{-7}$ & 60 \\
20 & 50 & $2.10 \times 10^{-9}$ & 15.4 & 50  & 60, 30 & $7.99 \times 10^{-9}$ & 433 \\
50 & 50 & $1.57 \times 10^{-9}$ & 146  & 100 & 50, 30 & $4.39 \times 10^{-9}$ & 2392 \\
\bottomrule
\end{tabular}
\end{table}

\paragraph{Dense Riccati case}
\label{subsec:exp-aniso-hjb}
To test dense coupling, we retain the LQR HJB structure but use the random symmetric negative-definite drift
$A = -\alpha_{\rm drift} I - \varrho_{\rm ric}G_{\rm ric}G_{\rm ric}^\top$,
where $G_{\rm ric}\in\mathbb R^{d\times d}$ has i.i.d.\ $\mathcal{N}(0,1/d)$ entries, with $\alpha_{\rm drift}=1$ and $\varrho_{\rm ric}=0.5$.
Taking the control and control-cost matrices $\mathbf B_{\rm ctrl}$ and $\mathbf R_{\rm ctrl}$ to be $I_d$, the exact value is $V_{\rm exact}(x)=x^\top P x+\mathrm{tr}(P)/\gamma_{\rm HJB}$, where the symmetric matrix $P\in\mathbb R^{d\times d}$ solves
\[
\gamma_{\rm HJB} P - A^\top P - P A + P^2 = \beta_{\rm HJB} I_d .
\]
For $d\ge10$, the off-diagonal/diagonal Frobenius-norm ratio of $P$ is about $0.18$, while $\nu_1/\nu_2=1.00$--$1.02$ provides no low-rank signal; the reported space therefore contains all order-two blocks and no active augmentation.
Using the isotropic-LQR update, the iteration reaches its plateau within two steps.
Figure~\ref{fig:secondary-mechanisms}(a) shows the fixed-order block growth described by Theorem~\ref{thm:complexity}: from $d=5$ to $30$, the block count increases from $15$ to $465$; under the reported per-block widths, the error changes from $1.8\times10^{-6}$ to $1.2\times10^{-3}$ and solve time from $5$ to $1511$ seconds.

\subsection{Non-separable coefficients}
\label{subsec:exp-var-coef}
To test coefficient-induced coupling on $\Omega=(0,1)^d$, we use $\kappa(x)=1+0.5\,\sin(\pi(x_1+x_2))$ and the order-two exact solution
$u_{\rm exact}=u_1+u_2$, where $u_1(x)=\sum_i\sin(\pi x_i)$ and $u_2(x)=\tfrac12\sum_{i<j}\sin(\pi x_i)\sin(\pi x_j)$.
The matching source and boundary data give superposition dimension $K_0=2$ (that is, $K_\tau$ at $\tau=0$), while $\kappa$ couples coordinates 1 and 2 at the operator level.
Specifically, $f=-\Delta u+\kappa u$ and the Dirichlet datum is the trace of $u_{\rm exact}$.

Figure~\ref{fig:secondary-mechanisms}(b) compares HA-RFM at $K \in \{1, 2\}$.
At $d=10$, the order-1 truncation stagnates at $2.0\times10^{-2}$, while all $45$ pair blocks give $3.1\times10^{-4}$; residual-Sobol screening retains $42$ of the $45$ pair blocks, including the coefficient-coupled pair $(1,2)$, and reaches $5.9\times10^{-3}$.
At $d=20$, all pairs improve the $K=1$ error by roughly one order of magnitude, while capped Sobol screening retains $80$ of the $190$ pairs and gives $1.3\times10^{-2}$.
At $d=50$, retaining $120$ of the $1225$ pairs leaves the error at the $K=1$ level, showing that distributed pair structure requires broader retention as $d$ increases.

\begin{figure}[!htbp]
\centering
\includegraphics[width=0.96\textwidth]{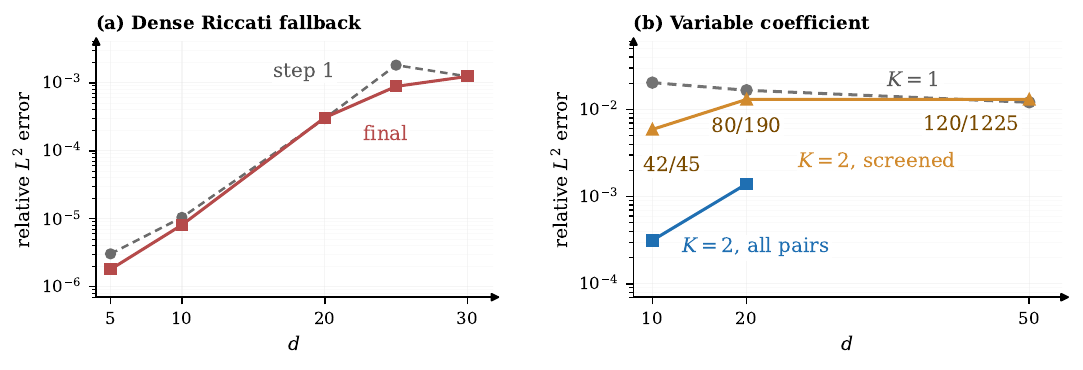}
\caption{Two cases governed by coordinate interactions. Dense-coupled Riccati HJB uses the order-two ANOVA route; for the variable-coefficient problem, all-pairs $K=2$ realizes the interaction-space gain, while capped screening identifies when broader retention is required.}
\label{fig:secondary-mechanisms}
\end{figure}

\subsection{Stability and structure-selection validation}
\label{subsec:exp-mc}

\paragraph{Independent validation and stability}
Without using the least-squares residual, the $d=10$ HA-RFM Picard solution matches the analytic Riccati value to root-mean-square relative error $1.3\times10^{-9}$ at six closed-loop points; Monte Carlo cost evaluation agrees to $2.2\times10^{-3}$, consistent with sampling error.
The three-draw ranges in Figure~\ref{fig:accuracy-efficiency} preserve the RFM accuracy ordering.
Across three feature seeds, the $d=10$ predictor-gradient oblique test gives relative $L^2$ errors of $9.47\times10^{-7}$--$1.34\times10^{-6}$, with direction alignment exceeding $0.9999$ in every run, while the dense Riccati errors are $7.60\times10^{-6}$--$8.09\times10^{-6}$.
With the feature seed fixed, increasing $(N_{\rm int},N_{\rm bc})$ from $(2500,625)$ to $(4000,1000)$ and $(8000,2000)$ decreases the Riccati error monotonically from $1.29\times10^{-5}$ to $8.09\times10^{-6}$ and $5.79\times10^{-6}$.

\paragraph{Spectral and residual selection}
\label{subsec:exp-ablation}
\label{subsec:sobol-validation}

With $\gamma_{\rm gap}=10$, the measured eigengap ratios place the oblique Poisson predictor covariance and Fokker--Planck source covariance in the low-rank regime, whereas dense Riccati has $\nu_1/\nu_2\approx1$ and supports no active augmentation; Table~\ref{tab:as-signal-ablation} confirms an accurate joint solve with the predictor-estimated direction.
For residual-Sobol recovery, we use a $d=15$ Poisson solution containing only pairs $(1,2)$, $(3,4)$, and $(5,6)$ among $105$ candidates.
A singleton predictor with $M_1=30$ is fitted using $(N_{\rm int},N_{\rm bc})=(6000,1500)$, after which the rule~\eqref{eq:active-set} with $\vartheta_{\rm Sob}=0.05$ is applied to the actual PDE residual $f-L\widehat u_1$.
Across the nine predictor-feature/QMC seed combinations in Table~\ref{tab:residual-support-recovery}, every run retains all true pairs; at $M_{\rm QMC}=4096$, all nine are exact with no false positives.
The independent pick-freeze result supplies the concentration theory, while these randomized-QMC estimates directly test practical recovery of the same closed indices.

\begin{table}[!htbp]
\centering
\caption{Residual-Sobol recovery of three active pairs among $105$ candidates at $d=15$, summarized over nine predictor-feature/QMC-seed combinations. Precision and recall use the exact three-pair support.}
\label{tab:residual-support-recovery}
\small
\setlength{\tabcolsep}{6pt}
\begin{tabular}{ccccc}
\toprule
\textbf{$M_{\rm QMC}$} & \textbf{exact recovery} & \textbf{mean precision} & \textbf{recall} & \textbf{selected-pair range} \\
\midrule
2\,048 & $3/9$ & 0.635 & 1.000 & 3--15 \\
4\,096 & $9/9$ & 1.000 & 1.000 & 3--3 \\
\bottomrule
\end{tabular}
\end{table}

The distributed-interaction runs show that broader pair mass requires a larger candidate cap, with post-selection accuracy also governed by the per-subset width $M_k$.
Across these tests, residual Sobol scores identify coordinate interactions, whereas gradient covariance identifies oblique active coordinates.

\section{Conclusion}
\label{sec:conclusion}

We developed HA-RFM, a PDE-driven random-feature framework that converts residual Sobol structure and predictor-gradient covariance into coordinate-aligned and oblique trial spaces, then fits them jointly by regularized least squares.
The analysis connects structural truncation, finite-width approximation, and sampled fitting to the final $L^2$ error, while the experiments show that the detected structure is translated into accurate approximations across coordinate-aligned, oblique, and semilinear regimes.
The resulting structure-adaptive trial spaces provide a practical route to high-dimensional elliptic PDE approximation.

Promising extensions include adaptive searches over larger interaction families and iterative updates of the coordinate and oblique blocks during nonlinear solves.
Extending the framework beyond product reference measures and elliptic operators would further open applications to correlated inputs and time-dependent high-dimensional PDEs.

\bibliographystyle{siamplain}
\bibliography{refs}

\end{document}